\font\smallit=cmti10
\font\smalltt=cmtt10
\renewcommand\section{\@startsection {section}{1}{\z@}
{-30pt \@plus -1ex \@minus -.2ex}
{2.3ex \@plus.2ex}
{\normalfont\normalsize\bfseries\boldmath}}
\renewcommand\subsection{\@startsection{subsection}{2}{\z@}
{-3.25ex\@plus -1ex \@minus -.2ex}
{1.5ex \@plus .2ex}
{\normalfont\normalsize\bfseries\boldmath}}
\renewcommand{\@seccntformat}[1]{\csname the#1\endcsname. }
\newtheorem{theorem}{Theorem}
\newtheorem{lemma}{Lemma}
\newtheorem{proposition}{Proposition}
\newtheorem{corollary}{Corollary}
\theoremstyle{definition}
\newtheorem{conjecture}{Conjecture}
\newtheorem{question}{Question}
\newcommand\cl[1]{\mathcal{#1}}
\newcommand{\Z}{\mathbb{Z}}
\newcommand{\eq}{\;=\;}
\newcommand{\deq}{\;:=\;}
\def\thin{{\hskip 1pt}}
\begin{document}

\begin{center}
\uppercase{\bf \boldmath The $2$-complexity of even positive integers}
\vskip 20pt
{\bf Pengcheng Zhang 
}\\
{\smallit Max Planck Institute for Mathematics, Vivatsgasse 7, Bonn, Germany}\\
{\tt pzhang@mpim-bonn.mpg.de}
\end{center}
\vskip 20pt
\centerline{\smallit Received: , Revised: , Accepted: , Published: } 
\vskip 30pt

\centerline{\bf Abstract}
\noindent
The question of integer complexity asks about the minimal number of $1$'s that are needed to express a positive integer using only addition and multiplication (and parentheses). In this paper, we propose the notion of $l$-complexity of multiples of~$l$, which specializes to integer complexity when $l=1$, prove several elementary results on $2$-complexity of even positive integers, and raise some interesting questions on $2$-complexity and in general $l$-complexity.

\pagestyle{myheadings}
\markright{\smalltt INTEGERS: 24 (2024)\hfill}
\thispagestyle{empty}
\baselineskip=12.875pt
\vskip 30pt

\section{Introduction}

Given a positive integer $n$, the \textit{integer complexity} of $n$, denoted as $\|n\|$, is defined as the minimal number of $1$'s that are needed to express $n$ in terms of $1$ using only addition and multiplication (and parentheses). Alternatively, one could also define the integer complexity recursively via
\begin{align*}
    \|1\|:=1\quad\text{ and }\quad\|n\|\deq\min_{\substack{a,b\in\Z^+  \\ a+b=n\text{ or }ab=n }}\big(\|a\|+\|b\|\big).
\end{align*}
The notion of integer complexity was first raised by Mahler--Popken \cite{mahler-popken}, and it has been a longstanding problem to determine the integer complexity of some given $n$. Integer complexity grows logarithmically:
\begin{align*}
    3\log_3n\;\leq\;\|n\|\;\leq\;3\log_2n.
\end{align*}
According to Guy \cite{guy}, Selfridge proved that the lower bound can be attained when $n=3^m$ and raised the question of whether there exists $a\in\Z^+$ such that $\|2^a\|<2a$. This question is nowadays usually incorporated into the following conjecture.

\begin{conjecture}
\label{l=1-conj}
For $a\geq 1$ and $b\geq 0$, $\|2^a\cdot 3^b\|=2a+3b$.
\end{conjecture}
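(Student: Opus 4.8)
The plan is to split the claimed equality into the (easy) upper bound and the (hard) lower bound. The inequality $\|2^a\cdot3^b\|\leq 2a+3b$ is immediate from the recursive definition: since the option $ab=n$ always yields $\|mn\|\leq\|m\|+\|n\|$, and since $\|2\|=2$ and $\|3\|=3$, iterating gives $\|2^a\cdot3^b\|\leq a\|2\|+b\|3\|=2a+3b$. Everything therefore reduces to establishing the lower bound $\|2^a\cdot3^b\|\geq 2a+3b$.

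For the lower bound I would induct on $a+b$ and inspect the top-level operation of an optimal expression for $N\deq 2^a\cdot3^b$. If it is a multiplication, $N=uv$ with $u,v>1$, then $u$ and $v$ are themselves $\{2,3\}$-smooth, say $u=2^{a_1}3^{b_1}$ and $v=2^{a_2}3^{b_2}$ with $a_1+a_2=a$ and $b_1+b_2=b$; the inductive hypothesis then gives $\|N\|=\|u\|+\|v\|\geq(2a_1+3b_1)+(2a_2+3b_2)=2a+3b$. Thus the multiplicative case closes cleanly, and the whole difficulty is concentrated in the additive case.

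The hard part will be the additive case $N=u+v$ with $u,v\geq 1$, where $u$ and $v$ need not be smooth and the induction no longer applies. The only general tool at hand is the stated bound $\|m\|\geq 3\log_3 m$, which gives $\|u\|+\|v\|\geq 3\log_3(uv)$. Writing $2a+3b=3\log_3\big(3^{2a/3}\cdot3^b\big)=3\log_3 N+a\log_3(9/8)$, one sees that finishing this way would require $uv\geq N\cdot(9/8)^{a/3}$, a multiplicative gain over the trivial bound $uv\geq N-1$ that grows with $a$. For unbalanced splittings such as $u=1,\ v=N-1$ this fails outright, so the crude logarithmic estimate is provably too weak, and the additive step cannot be settled by these elementary means.

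Consequently, I expect the genuine work to lie in a far finer study of near-optimal expressions: controlling the defect $\delta(m)\deq\|m\|-3\log_3 m$, classifying the low-complexity numbers that could appear as summands $u,v$ feeding an additive split, and ruling out the economical splittings one summand at a time. This is the direction in which the conjecture has been verified for bounded $a$. In full generality, however, closing the additive case is precisely the obstruction that keeps this statement a conjecture rather than a theorem, and I would not expect the elementary induction above to resolve it without substantial new input.
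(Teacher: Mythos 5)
This statement is \Cref{l=1-conj}, a conjecture: the paper offers no proof of it, and indeed it subsumes Selfridge's still-open question of whether $\|2^a\|<2a$ for some $a$, so no complete proof should be expected. Your partial analysis is correct as far as it goes: the upper bound $\|2^a\cdot 3^b\|\leq 2a+3b$ follows from submultiplicativity together with $\|2\|=2$ and $\|3\|=3$, and in the multiplicative case $N=uv$ the divisors $u,v$ of $2^a3^b$ are again $\{2,3\}$-smooth, so the induction on $a+b$ closes that branch. You are also right that the additive case is precisely the obstruction, that the bound $\|m\|\geq 3\log_3 m$ is too weak to handle splittings $N=u+v$ (already $u=1$, $v=N-1$ defeats it for the $l=1$ complexity, where summands need not be even), and that the existing partial progress (Altman--Zelinsky's classification of numbers with small defect $\|m\|-3\log_3 m$, cited in the introduction) proceeds exactly along the lines you sketch. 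In short: your proposal correctly identifies what can be proved and what remains open; it does not prove the conjecture, but nothing in the paper does either, and your assessment of where the difficulty lies is accurate.
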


There has been much progress on this topic, specifically on improving the upper bound for `generic' positive integers, algorithms for computing integer complexity, and partial results on \Cref{l=1-conj}. Here we refer to the papers by Steinerberger \cite{steinerberger}, Cordwell et al. \cite{algorithm-integer-complexity}, Altman--Zelinsky \cite{altman-zelinsky}, and Altman \cite{altman-2019} for discussions of previous results and recent advances on various aspects. We would also like to mention the paper by Arias de Reyna \cite{arias-de-reyna}, which raised many other conjectures on integer complexity, though most of them have been settled.

Now, let $l$ be a positive integer. Given a positive integer $n\in l\Z^+$, define the \textit{$l$-complexity} of~$n$, denoted as $\|n\|_l$, as the minimal number of $l$'s that are needed to express $n$ in terms of $l$ using only addition and multiplication (and parentheses). As before, one could also define the $l$-complexity recursively via
\begin{align*}
    \|l\|_l:=1\quad\text{ and }\quad\|n\|_l\deq\min_{\substack{a,b\in l\Z^+  \\ a+b=n\text{ or }ab=n}}\big(\|a\|_l+\|b\|_l\big).
\end{align*}
Indeed, $l$-complexity specializes to integer complexity when $l=1$, and in this article, we will mostly focus on $l=2$. Our main result is a complete classification of $n\in 2\Z^+$ with $\|n\|_2=m+1$ and $\|n\|_2=m+2$, where $m=\lceil\log_2n\rceil-1$.

\begin{theorem}
\label{2-classify}
Let $n\in 2\Z^+$ and $m=\lceil\log_2n\rceil-1$, i.e., $2^m<n\leq 2^{m+1}$.
\begin{enumerate}[label=\rm{(\arabic*)}]
    \item $\|n\|_2=m+1$ if and only if $n=2^{m+1}$ or $n=2^m+2^{m^\prime}$ for $1\leq m^\prime<m$.
    \item $\|n\|_2=m+2$ if and only if $n$ is of one of the following forms
    \begin{enumerate}[label=\rm{(\alph*)}]
        \item $2^{m_1}+2^{m_2}+2^{m_3}$ for $m=m_1>m_2>m_3\geq 1$;
        \item $2^{m_1}+2^{m_2}+2^{m_3}+2^{m_4}$ for $m=m_1>m_2>m_3>m_4\geq 2$ with $m_1+m_4=m_2+m_3$;
        \item $2^{m_1}+2^{m_1-3}+2^{m_2}+2^{m_2-1}$ for $m=m_1\geq m_2+3\geq 6$;
        \item $2^m+2^{m-5}+2^{m-6}+2^{m-7}$ for $m\geq 10$.
    \end{enumerate}
\end{enumerate}
\end{theorem}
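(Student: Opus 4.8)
The plan is to prove the two implications separately: explicit constructions for the \emph{if} direction and a structural induction on $n$ for the \emph{only if} direction. The common starting point is the elementary fact that $k$ copies of $2$ produce a value at most $2^k$: if $E_1,E_2$ are built from $k_1,k_2\ge 1$ twos with $E_i\le 2^{k_i}$, then $E_1+E_2\le 2^{k_1}+2^{k_2}\le 2^{k_1+k_2}$ and $E_1E_2\le 2^{k_1+k_2}$, so by induction on the number of twos every expression of size $\|n\|_2$ has value at most $2^{\|n\|_2}$. Hence $\|n\|_2\ge\lceil\log_2 n\rceil=m+1$ throughout the range $2^m<n\le 2^{m+1}$, which is the baseline lower bound. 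Writing the binary expansion $n=2^{e_1}+\cdots+2^{e_s}$ with $e_1>\cdots>e_s\ge 1$, the governing heuristic is that $\|n\|_2$ is ``usually'' $m+s-1$, and the theorem records exactly which $3$- and $4$-digit numbers beat this by landing at $m+2$.

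For the \emph{if} direction I would write down an expression of the stated size for each family. The power $2^{m+1}$ is a product of $m+1$ twos, and $2^m+2^{m'}$ equals $2^{m'-1}(2^{m-m'+1}+2)$ for $m'\ge 2$ (and simply $2^m+2$ for $m'=1$), giving $m+1$; this settles part (1). For (a) I factor out $2^{m_3-1}$ to reduce the lowest digit to $2^1$ and then use $2^{a}+2^{b}+2=(2^a+2^b)+2$, whose size is $(a+1)+1$, for a total of $(m_3-1)+(a+2)=m+2$. For (b) the hypothesis $m_1+m_4=m_2+m_3$ is precisely the condition that $n=(2^a+2^b)(2^c+2^d)$ with $a+c=m_1$, $a+d=m_2$, $b+c=m_3$, $b+d=m_4$; choosing $b=1$ forces $c=m_3-1$ and $d=m_4-1\ge 1$ (this is where $m_4\ge 2$ enters), and the product of two $2$-digit factors costs $(a+1)+(c+1)=m+2$. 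For (c) and (d) I would exhibit the analogous but more delicate factorizations—(c) as a $2$-digit factor times a $3$-digit factor, and (d), available only once $m\ge 10$, as a $2$-digit factor times a family-(b) number—and check that in each case the two sizes again add to $m+2$.

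The \emph{only if} direction is the substantive part, and I would prove it by strong induction on $n$, splitting on the top operation of an optimal expression $n=a+b$ or $n=ab$. The additive case is a finite analysis: taking $a\ge b$ we have $a>2^{m-1}$, so $a$ lies in one of two dyadic ranges, and $\|a\|_2+\|b\|_2\le m+2$ pins the small summand down to $b\in\{2,4\}$; applying the inductive classification to $a$ then shows that $n$ already exhibits one of the listed digit patterns. The multiplicative case is the crux. Setting $\delta(n)\deq\|n\|_2-\log_2 n\ge 0$, an optimal product satisfies $\delta(n)=\delta(a)+\delta(b)$, so $\|n\|_2\le m+2$ forces $\delta(a)+\delta(b)<2$; by induction $a$ and $b$ are then each among the low-defect shapes, that is, each has at most four binary digits of the restricted forms already classified.

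The main obstacle is to enumerate the binary expansions of all such products $ab$ and to determine exactly when the carries collapse them to at most four digits. Here one must show that the only survivors are families (b) (two $2$-digit factors), (c) (a $2$-digit times a $3$-digit factor), and (d) (the single sporadic product that appears once $m\ge 10$), while every other product either acquires at least five binary digits or has defect at least $2$. Verifying that no further accidental digit-collapse occurs, and that the thresholds $m_1\ge m_2+3$ in (c) and $m\ge 10$ in (d) are sharp, is where essentially all of the case analysis is concentrated.
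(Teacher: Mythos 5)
Your skeleton coincides with the paper's: the lower bound $\|n\|_2\geq m+1$ from the fact that $k$ twos yield at most $2^k$, explicit factorizations for the \emph{if} direction (your factorizations for (a)--(d) are the same ones the paper uses, including $2^m+2^{m-5}+2^{m-6}+2^{m-7}=(2^{m-6}+2^{m-9})(2^5+2^4+2^3+2^2)$ for (d)), and an induction splitting on whether the top operation of an optimal expression is a sum or a product. The additive case and the ``both factors classified inductively'' set-up are also as in the paper.

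However, the multiplicative case of the \emph{only if} direction for $\|n\|_2=m+2$ --- which you correctly identify as the crux --- is left as a named obstacle rather than resolved, and the resolution requires a specific idea you do not supply. Your plan to ``enumerate the binary expansions of all such products $ab$ and determine when the carries collapse them'' is not directly executable: the inductively classified shapes carry free parameters ($m_2,m_3,m_4,t_b$), so the set of candidate products is not finite until one first bounds the factors. The paper's mechanism is: each factor $c$ with $m_c=\|c\|_2$ satisfies $2^{m_c-2}<c\leq 2^{m_c}$; if both factors exceed $2^{m_c-1}$ they are two-digit numbers and $ab$ is visibly of form (a) or (b); in the remaining mixed case, the inductive classification gives the upper bound $b\leq 15\cdot 2^{m_b-5}$, whence $a=n/b>2^{m_a+3}/15$ forces $a=2^{m_a-1}+2^{t_a}$ with $t_a\geq m_a-4$, and then for each of the three values $t_a\in\{m_a-2,m_a-3,m_a-4\}$ a sharpened lower bound on $b$ (e.g.\ $b>85\cdot 2^{m_b-8}$ when $t_a=m_a-2$) leaves only a short explicit list of admissible shapes for $b$, each of which lands in (a)--(d) or yields a contradiction. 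Without this bounding step (or an equivalent finiteness device), your induction does not close, so as written the proof of part (2), \emph{only if}, is incomplete.
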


Using this classification, we are able to prove the following result on the $2$-complexity of even positive integers of certain particular forms.

\begin{theorem}
\label{6r-10r}
For $m\geq0$, we have
\begin{enumerate}[label=\rm{(\arabic*)}]
    \item $\|2^m\cdot 6^r\|_2=m+3r$ for $1\leq r\leq 7$, and $m+3r-1\leq\|2^m\cdot 6^r\|_2\leq m+3r$ for $8\leq r\leq 9$;
    \item $\|2^m\cdot 10^r\|_2=m+4r \text{ for }1\leq r\leq 4$, and $m+19\leq\|2^m\cdot 10^5\|_2\leq m+20$.
\end{enumerate}
\end{theorem}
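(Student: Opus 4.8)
The plan is to prove the two bounds separately: the upper bounds come from explicit expressions, and the lower bounds from \Cref{2-classify} combined with a count of binary digits. For the upper bounds I would first record the small values $\|6\|_2=3$ (as $6=2+2+2$, while two $2$'s reach only $4$) and $\|10\|_2=4$ (as $10=2\cdot2\cdot2+2$, while three $2$'s reach only $8$), together with $\|2^m\|_2=m$. Since $2$-complexity is subadditive under multiplication, the factorisations $2^m\cdot6^r=2^m\cdot 6\cdots 6$ and $2^m\cdot 10^r=2^m\cdot 10\cdots 10$ give $\|2^m6^r\|_2\le m+3r$ and $\|2^m10^r\|_2\le m+4r$, the claimed upper bounds.

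For the lower bounds, write $N$ for the number under consideration and $M:=\lceil\log_2 N\rceil-1$, the quantity playing the role of $m$ in \Cref{2-classify}. Because $6$ and $10$ are not powers of $2$, the reals $r\log_2 6$ and $r\log_2 10$ are never integers, and a short computation gives $M=m+\lceil r\log_2 6\rceil-1$ and $M=m+\lceil r\log_2 10\rceil-1$ respectively. Subtracting, the gap $g:=(m+3r)-(M+1)=2r-\lceil r\log_2 3\rceil$ equals $0,0,1,1,2,2,2,3,3$ for $r=1,\dots,9$, and the corresponding gap $4r-\lceil r\log_2 10\rceil$ for the $10^r$ case equals $0,1,2,2,3$ for $r=1,\dots,5$. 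The elementary bound $\|N\|_2\ge\lceil\log_2 N\rceil=M+1$ settles the cases $g=0$ at once.

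The crux is the treatment of $g\ge1$, and here the key observation is purely digital: every number listed in \Cref{2-classify}(1) is a sum of at most two powers of $2$, and every number in \Cref{2-classify}(2) is a sum of at most four powers of $2$. Hence a number with at least three nonzero binary digits cannot have complexity $M+1$, and one with at least five nonzero binary digits can have neither $M+1$ nor $M+2$. Since $N=2^{m+r}3^r$ (resp.\ $2^{m+r}5^r$), its binary weight equals that of $3^r$ (resp.\ $5^r$), which I would compute for the finitely many exponents involved. The arithmetic to be checked is that whenever $g\ge2$ the relevant power has weight exceeding $4$ (true: $3^5,\dots,3^9$ and $5^3,5^4,5^5$ all do), forcing $\|N\|_2\ge M+3$; with the upper bound this gives $\|N\|_2=m+3r$ or $m+4r$ when $g=2$, and the two-sided estimates when $g=3$ (for $r=8,9$ and for $10^5$, where $M+3=m+3r-1$ resp.\ $m+19$). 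When $g=1$ the weight is only $3$ or $4$, but this already exceeds $2$, so it suffices to exclude \Cref{2-classify}(1), giving $\|N\|_2\ge M+2=m+3r$ or $m+4r$.

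The main obstacle is exactly this bookkeeping: one must verify that the borderline weights $3$ and $4$ — occurring at $3^4,5^2$ (weight $3$) and $3^3$ (weight $4$) — all fall in $g=1$ instances, where excluding \Cref{2-classify}(1) alone suffices. This is tight: at $r=3$ the number $2^m6^3=2^{m+7}+2^{m+6}+2^{m+4}+2^{m+3}$ actually \emph{is} of form (2)(b) (one checks $m_1+m_4=m_2+m_3$), consistent with $\|N\|_2=M+2=m+9$, so form (2) cannot be excluded there — but since $g=1$ it need not be. Note we only ever use the implication ``listed $\Rightarrow$ few binary ones'', never its converse, so the detailed constraints in (2)(a)--(d) are irrelevant. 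Finally, the degradation to an interval for $r=8,9$ and $10^5$ is structural: there $g=3$, so discarding complexities $M+1$ and $M+2$ only yields $\|N\|_2\ge M+3$, and as \Cref{2-classify} classifies nothing of complexity beyond $M+2$ the value $m+3r-1$ versus $m+3r$ (resp.\ $m+19$ versus $m+20$) cannot be decided by this method.
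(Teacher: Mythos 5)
Your proposal is correct and takes essentially the same route as the paper: upper bounds by submultiplicativity from $\|6\|_2=3$ and $\|10\|_2=4$, and lower bounds by combining $\|N\|_2\geq\lceil\log_2N\rceil$ with the fact that everything classified in \Cref{2-classify} has binary weight at most $2$ (part (1)) or at most $4$ (part (2)) — which the paper packages as \Cref{lowerbound} and \Cref{llowerbound}. Your gap computation $g=2r-\lceil r\log_23\rceil$ (resp. $4r-\lceil r\log_210\rceil$) is exactly the arithmetic the paper performs with $\lceil(\log_26)r+1\rceil$ and $\lceil(\log_26)r+2\rceil$, so no further comparison is needed.
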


We would also like to propose the following conjecture cautiously, which may well be entirely wrong for large $r$. One should compare this conjecture with \Cref{l=1-conj} in the realm of integer complexity. In \Cref{questions}, we also propose a list of other interesting questions to investigate.

\begin{conjecture}
\label{l=2-conj}
For $m\geq0$ and $r\geq1$, $\|2^m\cdot 6^r\|_2=m+3r$ and $\|2^m\cdot 10^r\|_2=m+4r$.
\end{conjecture}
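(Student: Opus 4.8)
The plan is to recast both identities through the prime factorisations $2^m\cdot 6^r=2^{m+r}3^r$ and $2^m\cdot 10^r=2^{m+r}5^r$, and to isolate the two ``exact'' statements $\|2^a3^b\|_2=a+2b$ and $\|2^a5^b\|_2=a+3b$, of which \Cref{l=2-conj} is the case $a=m+r\geq b=r$. The upper bounds are the easy half and can be disposed of first by explicit construction: $6=2+2+2$ costs $3$ and $10=2+2^3$ costs $4$, and chaining these multiplicatively (or using $2^{a-b}\cdot 6^{b}$, resp.\ $2^{a-b}\cdot 10^{b}$, in the regime $a\geq b$) yields $\|2^a3^b\|_2\leq a+2b$ and $\|2^a5^b\|_2\leq a+3b$. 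All the content therefore sits in the matching lower bounds, which I would attack by strong induction on $n$ via the top operation of an optimal expression.

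The multiplicative case of the induction is clean and should go through unconditionally. Since the only odd prime factor of $n=2^a3^b$ is $3$, every factorisation $n=cd$ with $c,d$ even has $c=2^{a'}3^{b'}$ and $d=2^{a-a'}3^{b-b'}$; the inductive hypothesis gives $\|c\|_2+\|d\|_2\geq(a'+2b')+\bigl((a-a')+2(b-b')\bigr)=a+2b$, so products never beat the target (the base doubling step $n=2\cdot(n/2)$ being the instance $a'=1,b'=0$). The same computation works for $5$ with weight $3$ in place of $2$. Thus the multiplicative reduction reduces everything to controlling additive splits and to feeding the induction correct values on the factors that arise.

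The additive case $n=a+b$ is the crux and the expected main obstacle, for exactly the reasons that make \Cref{l=1-conj} hard. Here $a$ and $b$ may carry arbitrary prime factors, so no valuation-based potential function can certify the bound: for instance $18=4+14$ shows that $v_2(n)+2\thin v_3(n)$ is \emph{not} superadditive under addition, even though the true complexities behave correctly. What is needed is a genuinely sharp lower bound $\|n\|_2\geq\phi(n)$ that is tight on $2^a3^b$ (resp.\ $2^a5^b$) and provably superadditive under both operations --- equivalently, a ``defect'' theory for $2$-complexity in the style of Altman--Zelinsky and Altman, classifying even integers of small defect and excluding additive shortcuts. The elementary bound $\|n\|_2\geq\log_2 n$ (whose proof uses that doubling is the most efficient move, $ab\geq a+b$ for $a,b\geq2$) is provably valid but falls short of the target by $(2-\log_2 3)r\approx0.415\,r$ for $6^r$ and by $(3-\log_2 5)r\approx0.678\,r$ for $10^r$, so a fundamentally stronger input is unavoidable.

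I would be honest that the same induction forces the $3$- and $5$-heavy factors $2\cdot 3^r$ and $2\cdot 5^r$ upon us --- via decompositions such as $2^{m+r}3^r=(2\cdot 3^r)\cdot 2^{m+r-1}$ --- whose complexity is governed by the binary expansion of $3^r$ (resp.\ $5^r$) rather than by any clean multiplicative formula, and these lie in the regime $a<b$ where the needed equality is most doubtful. Indeed, should $\|2\cdot 3^r\|_2<2r+1$ ever hold, that very factorisation would \emph{disprove} \Cref{l=2-conj}, which is presumably why the authors flag it as possibly false for large $r$; the widening slack in the $\log_2$ bound makes this a real danger. A realistic program is therefore: (i) establish the multiplicative-reduction lemma unconditionally; (ii) extend \Cref{2-classify} to higher complexity levels $m+k$ far enough to pin down all binary expansions of $3^r,5^r$ in the relevant range, thereby pushing \Cref{6r-10r} past $r=7,9$; and (iii) either exhibit the first counterexample computationally or locate a structural reason the slack never materialises. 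I expect steps (ii)--(iii) to be the true obstacle, and the likely site of either a hard theorem or an outright disproof.
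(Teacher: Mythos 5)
You were asked to prove a statement that the paper itself does not prove: \Cref{l=2-conj} is offered as a conjecture, explicitly flagged by the author as possibly ``entirely wrong for large $r$,'' so there is no proof in the paper to compare against. Your write-up correctly refuses to manufacture one, and your assessment of where the difficulty sits is accurate. The pieces you do claim check out: the upper bounds $\|2^m\cdot 6^r\|_2\leq m+3r$ and $\|2^m\cdot 10^r\|_2\leq m+4r$ are exactly the paper's (trivial) half; the multiplicative case of a lower-bound induction does close unconditionally, since every factorization of $2^a3^b$ into two even factors stays inside $\{2^x3^y\}$ (and by the paper's recursive definition all subexpressions of an expression in $2$'s are even, so this is the right case analysis); your observation that a failure $\|2\cdot 3^r\|_2<2r+1$ would disprove the conjecture via $2^{m+r}3^r=(2\cdot 3^r)\cdot 2^{m+r-1}$ is correct and is essentially why the conjecture is equivalent to the \emph{universal} lower bound $\|2^a3^b\|_2\geq a+2b$; and your quantitative point that $\|n\|_2\geq\log_2 n$ falls short by $(2-\log_2 3)r$ and $(3-\log_2 5)r$ respectively is right and explains why the paper's method cannot reach all $r$.

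Two corrections. First, your displayed ``exact statements'' $\|2^a3^b\|_2=a+2b$ and $\|2^a5^b\|_2=a+3b$ are false as stated outside the regime $a\geq b$, and the paper itself records the counterexample: in \Cref{questions} it notes $\|54\|_2=8$, i.e.\ $\|2\cdot 3^3\|_2=8>1+2\cdot 3=7$ (indeed $\|6^3\|_2=9=\|2^2\cdot 54\|_2$). So only the lower-bound half can be hoped to hold universally --- which is all your induction needs, but the upper-bound clause must carry the restriction $a\geq b$ everywhere, not just parenthetically. Second, your step (ii) is not a speculative program but precisely the mechanism of the paper's partial result: \Cref{2-classify} classifies $\|n\|_2=m+1$ and $m+2$, yielding the lower bound $\log_2 n+2\leq\|n\|_2$ (\Cref{llowerbound}) whenever $n$ has at least five binary digits, and \Cref{6r-10r} then follows from the numerical coincidences $\lceil r\log_2 6+2\rceil=3r$ for $5\leq r\leq 7$ and $\lceil r\log_2 10+2\rceil=4r$ for $3\leq r\leq 4$. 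Because the slack grows linearly in $r$, each additional unit of $r$ requires classifying roughly one more complexity level $m+k$, exactly as you predict; this is why the paper stalls at $r=7$ (resp.\ $r=4$) with a gap of $1$ at $r=8,9$ (resp.\ $r=5$). In short: your proposal is not a proof, but for this statement no proof is known, and your diagnosis of the obstruction --- the additive splits, the need for a defect theory in the style of Altman--Zelinsky, and the genuine possibility of disproof through the binary expansions of $3^r$ and $5^r$ --- is consistent with the paper.
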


\section{Preparatory Lemmas}
\label{l-complex-section}

Throughout this section, let $l>1$ be a fixed positive integer. The main result that we will prove in this section is that $l$-complexity also grows logarithmically. For convenience, we also adopt the notational convention that $\|1\|_l=0$ for $l>1$. Note that this notation is compatible with that $\|ab\|_l\leq\|a\|_l+\|b\|_l$ when $a$ or $b$ is $1$.

\begin{proposition}
\label{log-growth}
Let $l>1$ and $n\in l\Z^+$. Then, $\log_ln\leq\|n\|_l\leq l\log_ln-1$.
\end{proposition}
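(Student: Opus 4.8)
The plan is to prove the two bounds separately. For the lower bound $\log_l n \le \|n\|_l$, I would argue that any expression for $n$ using $k$ copies of $l$ evaluates to a number that is at most $l^k$. This is the analogue of the fact that $k$ ones can produce at most $3^{k/3}$-type bounds, but here the base is cleaner. Formally I would induct on the structure of the expression (equivalently on $\|n\|_l$): if $n = a+b$ or $n = ab$ with $\|a\|_l + \|b\|_l = \|n\|_l$, then by the inductive hypothesis $a \le l^{\|a\|_l}$ and $b \le l^{\|b\|_l}$, and in either the sum or the product case one checks $n \le l^{\|a\|_l + \|b\|_l} = l^{\|n\|_l}$. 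The base case is $n = l$ with $\|l\|_l = 1$. Since $n \le l^{\|n\|_l}$, taking $\log_l$ gives $\log_l n \le \|n\|_l$.

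\medskip

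For the upper bound $\|n\|_l \le l\log_l n - 1$, the plan is to exhibit an explicit efficient way of building $n$. The most economical building block is $l$ itself, which costs one copy. Since $l \cdot l = l^2$ costs $2$ and more generally $l^k$ costs $k$, powers of $l$ are cheap, so the strategy is to write $n$ in base $l$ and assemble it from its digits. Concretely, if $n = \sum_{i=0}^{t} d_i\, l^i$ with $0 \le d_i \le l-1$ and $d_t \ge 1$, I would use a Horner-type nesting $n = (\cdots((d_t\, l + d_{t-1})\,l + d_{t-2})\,l + \cdots)\,l + d_0$ to control the cost. Building a digit $d_i$ (an integer between $1$ and $l-1$) costs at most $\|d_i\|_l$, and since $n \in l\Z^+$ the lowest digit $d_0$ is $0$ so the innermost correction terms are the real cost drivers. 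The key estimate is that each nonzero digit $d_i < l$ can be expressed using at most $l-1$ copies of $l$ (for instance $d_i = 1 + 1 + \cdots$ is not available since $1$ is not a free building block, so one instead uses that $d_i \cdot l$ appears inside the nesting and reuses the multiplication by $l$). I would bound the total count and optimize to reach $l \log_l n - 1$.

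\medskip

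The main obstacle is getting the upper-bound constant exactly right rather than merely $O(\log_l n)$. The difficulty is that $1$ is not a free unit when $l > 1$: to form a digit $d$ with $1 \le d \le l-1$ one must build it out of copies of $l$, and the cheapest such construction is not entirely obvious. I expect the clean approach is to avoid digit-by-digit constructions and instead use binary-type doubling together with the identity that adding $l$ (cost $1$) or multiplying by $l$ (cost $1$) lets one implement a base-$l$ or mixed-radix scheme where the ``$\log_l$'' factor comes from the number of multiplications and the factor $l$ (minus one) absorbs the additive corrections at each stage. I would set up the bookkeeping so that a number in the range $(l^{k-1}, l^k]$ is shown to have complexity at most $lk - 1$; this is exactly $l\log_l n - 1$ when $n = l^k$, giving the tight cases (powers of $l$), and the inductive step would split $n$ according to its leading behavior in base $l$ and verify the bound is preserved. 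Checking that the additive slack of $l-1$ at the top level suffices to cover the general (non-power) case is the delicate part of the argument.
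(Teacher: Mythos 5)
Your overall route is the same as the paper's: the lower bound comes from the observation that $m$ copies of $l$ can produce at most $l^m$ (your structural induction is exactly the paper's \Cref{largest-number}, proved there via $a+l\le a\cdot l$ for $l>1$), and the upper bound comes from the base-$l$ expansion $n=a_1l^{m_1}+\cdots+a_kl^{m_k}$ with $m_k\ge1$, assembled by a Horner-type nesting in which each nonzero digit $a_i$ is realized as $a_il=l+\cdots+l$ at a cost of $a_i$ copies; this is precisely the paper's \Cref{sum-power}, which gives $\|n\|_l\le m_1+a_1+\cdots+a_k-1$. Your lower-bound argument is complete and correct, and your observation that $n\in l\Z^+$ forces $d_0=0$ (so every digit sits above at least one factor of $l$) is exactly the point that makes the digit construction legal.

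For the upper bound, however, the final reduction you propose would not prove the stated inequality. You say you would show that every $n\in(l^{k-1},l^k]$ has $\|n\|_l\le lk-1$; but for $n$ strictly below $l^k$ one has $l\log_ln-1<lk-1$, so that target is strictly weaker than the proposition (for $n=l^{k-1}+l$, say, the proposition demands roughly $l(k-1)-1$, well below $lk-1$, whereas the true complexity is only $k$). Relatedly, powers of $l$ are not the tight cases you claim: $\|l^k\|_l=k$ while the bound there reads $lk-1$, so nothing forces equality at $n=l^k$. The bookkeeping that actually works is the one already implicit in your Horner scheme: the total cost is the top exponent $m_1$ plus the digit sum minus $1$, and since $m_1\le\log_ln$, there are at most $m_1$ nonzero digits, and each digit is at most $l-1$, this is at most $\log_ln+(l-1)\log_ln-1=l\log_ln-1$. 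With that count substituted for your interval-based target, your argument coincides with the paper's proof.
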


The lower bound follows from \Cref{largest-number}, which may seem trivially true but does depend on the assumption that $l>1$; the upper bound follows from \Cref{sum-power}.

\begin{lemma}
\label{largest-number}
Let $l>1$ and $m\in\Z^+$. Then, $l^m$ is the largest number that one can obtain using addition and multiplication with $m$ copies of $l$.
\end{lemma}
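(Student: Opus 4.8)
The plan is to use strong induction on the number $m$ of copies of $l$, exploiting the fact that $l>1$ forces $l\geq 2$. For the base case $m=1$, the only expression one can form is a single $l$, so the largest (and indeed only) obtainable value is $l=l^1$, and there is nothing to prove.

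For the inductive step, I would suppose the claim holds for every positive integer smaller than $m$ and consider an arbitrary expression built from $m$ copies of $l$. Its outermost operation is either an addition or a multiplication, and in either case it partitions the $m$ copies into two nonempty subexpressions using $a$ and $b$ copies respectively, where $a+b=m$ and $a,b\geq 1$. Since $m\geq 2$ here, both $a,b<m$, so the induction hypothesis applies and the two subexpressions evaluate to at most $l^a$ and $l^b$. If the outermost operation is a multiplication, the value is therefore at most $l^a\cdot l^b=l^{a+b}=l^m$, which already matches the target bound.

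The only remaining case is addition, and this is precisely where the hypothesis $l>1$ is used. Assuming without loss of generality that $a\leq b$, I would bound
\[
l^a+l^b\;\leq\;2l^b\;\leq\;l\cdot l^b\;=\;l^{b+1}\;\leq\;l^{a+b}\;=\;l^m,
\]
where the middle inequality uses $2\leq l$ and the final one uses $a\geq 1$. Hence every expression formed from $m$ copies of $l$ evaluates to at most $l^m$. Since the product of $m$ copies of $l$ attains the value $l^m$ exactly, this shows that $l^m$ is the maximum, completing the induction.

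I do not anticipate a genuine obstacle in this argument; the entire content is concentrated in the single step $2l^b\leq l^{b+1}$, which is exactly what breaks when $l=1$. This transparently explains the necessity of the hypothesis $l>1$ flagged in the remark preceding the lemma: for $l=1$ one has $l^a+l^b=2>1=l^m$, so repeated addition outperforms multiplication and the conclusion fails.
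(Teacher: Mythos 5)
Your proof is correct and rests on the same single idea as the paper's: once $l\geq 2$, addition of subexpressions can never beat multiplication, so the all-product expression $l^m$ is maximal. The paper compresses this into the one-line observation that $a+l\leq a\cdot l$ for the relevant $a$, whereas you carry out the structural induction explicitly via $l^a+l^b\leq 2l^b\leq l^{a+b}$; your version is a more complete writeup of the same argument, not a different route.
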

\begin{proof}
Indeed, for $l>1$, we always have $a+l\leq a\cdot l$ for all $a\in\Z^+$, so the lemma follows.
\end{proof}

\begin{lemma}
\label{sum-power}
Let $l>1$. Then, 
\begin{align*}
    \|a_1l^{m_1}+a_2l^{m_2}+\cdots+a_kl^{m_k}\|_l\;\leq\; m_1+a_1+a_2+\cdots+a_k-1
\end{align*}
for $m_1>m_2>\cdots>m_k\geq1$ and $0\leq a_i\leq l-1$ with $a_1\neq 0$.
\end{lemma}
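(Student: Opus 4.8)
The plan is to exhibit an explicit expression for $N\deq a_1l^{m_1}+\cdots+a_kl^{m_k}$ that uses at most $m_1+a_1+\cdots+a_k-1$ copies of $l$, and then to invoke subadditivity of $\|\cdot\|_l$ under both addition and multiplication, namely $\|X+Y\|_l\leq\|X\|_l+\|Y\|_l$ and $\|XY\|_l\leq\|X\|_l+\|Y\|_l$ (both immediate from the recursive definition), to turn that explicit expression into the claimed upper bound. The one genuine obstacle is that each coefficient satisfies $a_i\leq l-1<l$ and hence cannot be written on its own using copies of $l$, since by \Cref{largest-number} the smallest expressible positive integer is $l$ itself. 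The construction must therefore never isolate a coefficient, but always keep it tied to at least one factor of $l$.

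First I would record the single-term estimate: for $1\leq a\leq l-1$ and $m\geq1$,
\begin{align*}
    \|a\thin l^m\|_l\;\leq\;(m-1)+a,
\end{align*}
which holds because $a\thin l=\underbrace{l+\cdots+l}_{a}$ costs $a$ copies, and multiplying by $l$ a further $m-1$ times costs $m-1$ more. This is exactly the base case $k=1$, and it already displays the mechanism that keeps the coefficient glued to a power of $l$.

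For the general case I would use a Horner-type nested representation, building $N$ from the most significant term downward while remaining a positive multiple of $l$ throughout. Writing $d_j\deq m_j-m_{j+1}\geq1$ for $1\leq j\leq k-1$, set
\begin{align*}
    S_1\deq a_1l,\qquad S_j\deq S_{j-1}\thin l^{\,d_{j-1}}+a_j\thin l\quad(2\leq j\leq k),
\end{align*}
where the summand $a_j l$ is simply omitted when $a_j=0$. A telescoping of the exponents shows $S_k=\sum_{i=1}^k a_i\thin l^{\,m_i-m_k+1}$, so that $N=S_k\cdot l^{\,m_k-1}$; here the hypothesis $m_k\geq1$ is precisely what makes the final exponent $m_k-1\geq0$ legitimate, and $a_1\neq0$ keeps every $S_j$ positive.

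It then remains to count copies of $l$. The initialization $S_1=a_1l$ costs $a_1$; each step $S_{j-1}\mapsto S_j$ costs $d_{j-1}$ (the $d_{j-1}$ multiplications by $l$) together with $a_j$ (forming $a_jl$, which is $0$ when $a_j=0$), the final addition consuming no further copies; and the last multiplication $S_k\mapsto S_k\cdot l^{m_k-1}$ costs $m_k-1$. Summing and using $\sum_{j=1}^{k-1}d_j=m_1-m_k$ yields
\begin{align*}
    a_1+(m_1-m_k)+\sum_{j=2}^k a_j+(m_k-1)\eq m_1+\sum_{i=1}^k a_i-1,
\end{align*}
as desired. I expect the only delicate points to be purely bookkeeping: verifying the telescoping identity for $S_k$, correctly handling the indices where $a_j=0$ (the shift by $l^{d_{j-1}}$ is still charged, but the added term $a_jl$ and its addition are not), and confirming that each elementary step is legitimately accounted for by the two subadditivity inequalities above.
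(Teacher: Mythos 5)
Your proof is correct and takes essentially the same approach as the paper: the paper establishes the identical Horner-type nested expression $\big(\cdots(a_1l\cdot l^{m_1-m_2}+a_2l)\cdots+a_kl\big)\cdot l^{m_k-1}$ by induction on $k$, factoring out $l^{m_k-1}$ and splitting off $a_kl$ at each step, with exactly the same cost count. Your iterative construction of the $S_j$ is just that induction unrolled.
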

\begin{proof}
We prove this by induction on $k$. If $k=1$, then
\begin{align*}
    \|a_1l^{m_1}\|_l\;\leq\;\|l^{m_1-1}\|_l+\|a_1l\|_l\;\leq\;m_1+a_1-1,
\end{align*}
so the result holds. Now, suppose that $k>1$ and that the result holds for $k-1$. Then, 
\begin{align*}
    \|a_1l^{m_1}+\cdots+a_kl^{m_k}\|_l&\;\leq\;\|l^{m_k-1}\|_l+\|a_1l^{m_1-m_k+1}+a_2l^{m_2-m_k+1}+\cdots+a_kl\|_l \\
    &\;\leq\;\|l^{m_k-1}\|_l+\|a_1l^{m_1-m_k+1}+\cdots+a_{k-1}l^{m_{k-1}-m_k+1}\|_l+\|a_kl\|_l \\
    &\;\leq\;(m_k-1)+\big((m_1-m_k+1)+(a_1+\cdots+a_{k-1})-1\big)+a_k \\
    &\eq m_1+a_1+a_2+\cdots+a_k-1,
\end{align*}
where in the third line we use the induction hypothesis for $k-1$. The result then follows from induction.
\end{proof}

\begin{proof}[Proof of \Cref{log-growth}]
The lower bound follows from the fact that $n\leq l^{\|n\|_l}$ by \Cref{largest-number}. For the upper bound, write $n=a_1l^{m_1}+a_2l^{m_2}+\cdots+a_kl^{m_k}$ as in \Cref{sum-power}. Then, $m_1\leq\log_ln$ and $k\leq m_1\leq\log_ln$. Hence, by \Cref{sum-power},
\begin{align*}
    \|n\|_l&\;\leq\;m_1+a_1+a_2+\cdots+a_k-1\\
    &\;\leq\;m_1+(l-1)k-1 \\
    &\;\leq\;\log_ln+(l-1)\log_ln-1\eq l\log_ln-1.
\end{align*}
\end{proof}

\begin{corollary}
\label{two-power}
Let $l>1$. Then, $\|l^m+l^{m^\prime}\|_l=m+1$ for $m\geq m^\prime\geq1$.
\end{corollary}
\begin{proof}
It follows from \Cref{sum-power} that $\|l^m+l^{m^\prime}\|_l\leq m+1$. On the other hand, it follows from \Cref{log-growth} that $\|l^m+l^{m^\prime}\|_l\geq\log_l(l^m+l^{m^\prime})>m$. The result then follows.
\end{proof}

Our last proposition also demonstrates the difference between $l=1$ and $l>1$. This proposition is essentially saying that given $n=b+al$ for $b\in l^2\Z^+$ and $0\leq a\leq l-1$, the most efficient way to write $n$ using $l$ is to first write $b$ using $l$ in the most efficient way, and then write $al$ as $l+\cdots+l$ with $a$ $l$'s.

\begin{proposition}
Let $l>1$. For $n\in l\Z^+$, write $n=b+al$ uniquely for $b\in l^2\Z^+$ and $0\leq a\leq l-1$. Then, $\|n\|_l=\|b\|_l+a$. 
\end{proposition}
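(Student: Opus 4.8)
The plan is to prove the two inequalities $\|n\|_l\leq\|b\|_l+a$ and $\|n\|_l\geq\|b\|_l+a$ separately. Throughout I would adopt the convention $\|0\|_l=0$, so that the boundary case $b=0$ (that is, $n=al$ with $1\leq a\leq l-1$) is included and reads $\|al\|_l=a$. The upper bound is immediate: take an optimal expression for $b$ using $\|b\|_l$ copies of $l$ and then add $a$ further copies, writing $n=b+\underbrace{l+\cdots+l}_{a}$, so that $\|n\|_l\leq\|b\|_l+a$.

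For the lower bound I would argue by strong induction on $n\in l\Z^+$, the base case $n=l$ being clear. In the inductive step, if $a=0$ then $b=n$ and there is nothing to prove, so assume $a\geq1$. Fix a decomposition realizing $\|n\|_l$ through the recursive definition. The crucial observation—and the place where the hypothesis $l>1$ first enters—is that this top operation cannot be a multiplication: if $n=n_1n_2$ with $n_1,n_2\in l\Z^+$, then $l^2\mid n$, forcing $a=0$ and contradicting $a\geq1$. Hence the top operation is an addition $n=n_1+n_2$ with $n_1,n_2\in l\Z^+$ both strictly smaller than $n$ and $\|n\|_l=\|n_1\|_l+\|n_2\|_l$.

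Writing $n_i=b_i+a_il$ as in the statement and invoking the induction hypothesis gives $\|n_i\|_l=\|b_i\|_l+a_i$. Since $0\leq a_i\leq l-1$, the sum $a_1+a_2$ carries by $c:=\lfloor(a_1+a_2)/l\rfloor\in\{0,1\}$, so that $a=a_1+a_2-cl$ and $b=b_1+b_2+cl^2$. The computation I would then run is
\begin{align*}
\|n\|_l&\eq\|b_1\|_l+\|b_2\|_l+a_1+a_2\eq\|b_1\|_l+\|b_2\|_l+a+cl\\
&\;\geq\;\|b\|_l+a+c(l-2)\;\geq\;\|b\|_l+a,
\end{align*}
where the first inequality uses subadditivity in the form $\|b\|_l=\|b_1+b_2+cl^2\|_l\leq\|b_1\|_l+\|b_2\|_l+2c$ (recall $\|l^2\|_l\leq2$), and the last inequality uses $l\geq2$. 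Together with the upper bound this closes the induction.

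The step to watch is the carry case $c=1$: here adding the low-order parts overflows and injects a copy of $l^2$ into $b$. The content of the argument is that manufacturing this $l^2$ costs only $2$ copies of $l$, whereas the overflow has already consumed $l$ extra copies inside $a_1+a_2$; since $l\geq2$, overflowing can never beat the honest representation, which is exactly the inequality $c(l-2)\geq0$. I would flag explicitly that this is the step that collapses when $l=1$, matching the proposition's emphasis that the result genuinely depends on $l>1$.
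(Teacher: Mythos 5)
Your proof is correct and follows essentially the same route as the paper's: induction on $n$, the observation that the top operation must be an addition since $l^2\nmid n$ when $a\geq1$, and a carry analysis resting on the same inequality $\|b_1+b_2+l^2\|_l\leq\|b_1\|_l+\|b_2\|_l+2$. The only cosmetic difference is that you absorb the carry case directly via $c(l-2)\geq0$, whereas the paper rules it out by contradiction (showing it would force $l\leq 2$ while also forcing $l\geq3$); your packaging is slightly more uniform but the content is identical.
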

\begin{proof}
We will prove this by induction on $n$. If $b=0$, then $\|al\|_l=a$ simply because we can only use addition in this case as $al< l\cdot l$ (or use the recursive definition inductively). Hence, suppose that $b>0$, i.e., $n\geq l^2$. If $a=0$, then indeed this holds, so suppose that $a>0$. Now, suppose that the proposition holds for all $n^\prime<n$. By the recursive definition, there exist $x,y\in l\Z^+$ with $x+y=b+al$ or $xy=b+al$ such that $\|x\|_l+\|y\|_l=\|b+al\|_l$. Since $l^2\nmid b+al$, this can only happen when $x+y=b+al$.

Let $x=b_x+a_xl$ and $y=b_y+a_yl$ with $b_x,b_y\in l^2\Z^+$ and $0\leq a_x,a_y\leq l-1$. By the induction hypothesis for $x,y<n$, 
\begin{align*}
    \|x\|_l\eq\|b_x\|_l+a_x\quad \text{ and }\quad \|y\|_l\eq\|b_y\|_l+a_y.
\end{align*}
Note that we must have $a_x+a_y=a$ or $l+a$. Suppose by contradiction that $a_x+a_y=l+a$. On one hand,
\begin{align*}
    l+1\;\leq\;l+a\eq a_x+a_y\;\leq\;2l-2
\end{align*}
so $l\geq3$. On the other hand,
\begin{align*}
    b+al\eq x+y\eq b_x+b_y+l^2+al
\end{align*}
so
\begin{align*}
    \|b_x\|_l+\|b_y\|_l+a_x+a_y\eq\|x\|_l+\|y\|_l\eq
    \|b+al\|_l&\eq \|b_x+b_y+l^2+al\|_l \\
    &\;\leq\;\|b_x\|_l+\|b_y\|_l+2+a.
\end{align*}
This implies that $l+a=a_x+a_y\leq 2+a$, so $l\leq 2$, which leads to contradiction. Hence, $a_x+a_y=a$ and $b_x+b_y=b$, so
\begin{align*}
    \|b_x\|_l+\|b_y\|_l+a\eq \|x\|_l+\|y\|_l\eq\|b+al\|_l\;\leq\;\|b\|_l+a\;\leq\;\|b_x\|_l+\|b_y\|_l+a.
\end{align*}
The result then follows.
\end{proof}

\section{$2$-complexity}

In this section, we will specialize everything to the case when $l=2$. This section is dedicated to proving \Cref{2-classify} and \Cref{6r-10r}. We first summarize the results that follow from \Cref{l-complex-section}.

\begin{lemma}
\label{2-sum-power}
The following are true:
\begin{enumerate}[label=\rm{(\arabic*)}]
    \item $\|2^{m_1}\|_2=m_1$ and $\|2^{m_1}+2^{m_2}\|_2=m_1+1$ for $m_1>m_2\geq1$;
    \item $\|2^{m_1}+2^{m_2}+\cdots+2^{m_k}\|_2\leq m_1+k-1$ for $m_1>m_2>\cdots>m_k\geq1$;
    \item $\log_2n\leq\|n\|_2\leq2\log_2n-1$.
\end{enumerate}
\end{lemma}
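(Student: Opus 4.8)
The statement is a direct specialization of the results established in \Cref{l-complex-section} to the case $l=2$, so the plan is simply to invoke those results with the appropriate parameters, handling the three parts in turn.

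For part (1), I would obtain the equality $\|2^{m_1}\|_2=m_1$ by matching an upper and a lower bound that are both already available. The upper bound comes from \Cref{sum-power} with $k=1$ and $a_1=1$, which gives $\|2^{m_1}\|_2\leq m_1+1-1=m_1$, while the lower bound $\|2^{m_1}\|_2\geq\log_2 2^{m_1}=m_1$ comes from \Cref{log-growth} (equivalently, directly from \Cref{largest-number}). The second equality $\|2^{m_1}+2^{m_2}\|_2=m_1+1$ is exactly \Cref{two-power} specialized to $l=2$, $m=m_1$, and $m^\prime=m_2$.

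For part (2), I would note that when $l=2$ the admissible coefficients in \Cref{sum-power} satisfy $0\leq a_i\leq 1$, so expressing a sum $2^{m_1}+\cdots+2^{m_k}$ of distinct powers of $2$ forces $a_1=\cdots=a_k=1$, with the hypothesis $a_1\neq0$ automatically met. \Cref{sum-power} then yields $\|2^{m_1}+\cdots+2^{m_k}\|_2\leq m_1+(a_1+\cdots+a_k)-1=m_1+k-1$, which is precisely the claimed bound. Finally, part (3) is verbatim \Cref{log-growth} with $l=2$, since $l\log_l n-1=2\log_2 n-1$.

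Since each part reduces to a single earlier result (or, in the first equality of part (1), to combining an upper bound from \Cref{sum-power} with the matching lower bound from \Cref{log-growth}), there is no genuine obstacle; the lemma merely records the $l=2$ consequences in a form convenient for the remainder of the section. The only point requiring a moment of care is verifying in part (2) that the coefficient constraint of \Cref{sum-power} is met, which holds automatically once all the powers of $2$ are taken to be distinct with coefficient one.
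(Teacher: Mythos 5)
Your proposal is correct and matches the paper's intent exactly: the paper gives no separate proof of this lemma, stating only that it ``summarizes the results that follow from'' the preparatory section, and your derivation of each part from \Cref{sum-power}, \Cref{two-power}, and \Cref{log-growth} (with the lower bound for $\|2^{m_1}\|_2$ coming from \Cref{largest-number}) is precisely the intended specialization to $l=2$.
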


Note that the first part of \Cref{2-sum-power} essentially says that the inequality in the second part is an equality when $k=1,2$. In fact, the equality also holds when $k=3$. To show this, we will first prove the following proposition, which is also the first part of \Cref{2-classify}.

\begin{proposition}
\label{2-two-power-classify}
Let $m\geq 1$ and $n\in 2\Z^+$ with $2^m<n\leq2^{m+1}$. Then, $\|n\|_2=m+1$ if and only if $n$ is of the form $2^m+2^t$ for some $1\leq t\leq m$.
\end{proposition}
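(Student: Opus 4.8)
The plan is to treat the two implications separately, with the forward direction being an immediate consequence of the earlier results and the converse carrying all the weight via an induction on $m$. For the ``if'' direction: if $n=2^m+2^t$ with $1\le t<m$, then $\|n\|_2=m+1$ by \Cref{two-power}, while if $t=m$ then $n=2^{m+1}$ and $\|2^{m+1}\|_2=m+1$ by \Cref{2-sum-power}(1). I would also record at the outset that, by \Cref{log-growth}, every $n$ with $2^m<n\le 2^{m+1}$ satisfies $\|n\|_2\ge\log_2 n>m$, hence $\|n\|_2\ge m+1$; thus $m+1$ is the minimal possible complexity in this range, and the proposition identifies exactly the minimizers.

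The real content is the converse, which I would prove by strong induction on $m$, the base case $m=1$ being forced since $n=4=2^1+2^1$ is the only even integer in range. Assuming $\|n\|_2=m+1$, the recursive definition supplies even integers $a,b\ge 2$ with $a+b=n$ or $ab=n$ and $\|a\|_2+\|b\|_2=m+1$. In the \emph{sum} case, take $a\ge b$; then $a\ge n/2>2^{m-1}$, so $\|a\|_2>m-1$ by \Cref{log-growth}, forcing $\|a\|_2=m$ and $\|b\|_2=1$, i.e.\ $b=2$. Now \Cref{largest-number} gives $a\le 2^{\|a\|_2}=2^m$, while $a=n-2>2^m-2$; the only even integer in $(2^m-2,2^m]$ is $2^m$, so $n=2^m+2$, which is of the required form.

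The \emph{product} case is where the main difficulty lies. Writing $p=\|a\|_2$ and $q=\|b\|_2$ with $p+q=m+1$, \Cref{largest-number} gives $a\le 2^p$ and $b\le 2^q$; since $ab=n>2^m=2^{p+q-1}$ we must in fact have $2^{p-1}<a\le 2^p$ and $2^{q-1}<b\le 2^q$. Hence $\|a\|_2=p$ and $\|b\|_2=q$ are the minimal complexities for $a$ and $b$ within their dyadic blocks, and the induction hypothesis (applied with parameters $p-1,q-1<m$) lets me write $a=2^{p-1}+2^s$ and $b=2^{q-1}+2^r$ with $1\le s\le p-1$ and $1\le r\le q-1$; the degenerate cases $p=1$ or $q=1$, where the corresponding factor equals $2$, reduce directly to the inductive hypothesis applied to $n/2$. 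Expanding the product gives
\[
n=2^{m-1}+2^{p-1+r}+2^{q-1+s}+2^{s+r},
\]
and the task becomes showing that the constraint $n>2^m$ collapses this four-term expression to the shape $2^m+2^t$. The key observation is that $p-1+r$ and $q-1+s$ are each at most $m-1$, with equality precisely when $b$ or $a$ is a pure power of $2$: if $a=2^p$ or $b=2^q$ the product is immediately $2^{m+1}$ or $2^m+2^{p+r}$ (respectively $2^m+2^{q+s}$), already of the desired form. Otherwise $s\le p-2$ and $r\le q-2$, so all three trailing exponents are at most $m-2$, and a short computation shows $n>2^m$ can only hold when $s=p-2$ and $r=q-2$, in which case the two middle terms merge to $2^{m-1}$ and $n=2^m+2^{m-3}$. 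In every case $n=2^m+2^t$, completing the induction.

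I expect the bookkeeping in this final step to be the delicate part: one must track when the four exponents coincide so that terms combine, and verify that \emph{all} configurations other than the ones listed produce $n\le 2^m$ (for instance, dropping either middle exponent below $m-2$ already forces $n\le\frac{15}{16}\,2^m$). Apart from this, the only other point needing care is the implicit but essential fact that every factor appearing in a $2$-complexity decomposition is even, which is what legitimizes applying the inductive hypothesis to $a$, $b$, and $n/2$.
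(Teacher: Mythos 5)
Your proposal is correct and follows essentially the same route as the paper: induction on $m$, the sum case collapsing to $n=2^m+2$, and the product case handled by localizing each factor to its dyadic block, invoking the inductive hypothesis to write $a=2^{p-1}+2^s$, $b=2^{q-1}+2^r$, and checking that $n>2^m$ forces either a pure power-of-two factor or $s=p-2$, $r=q-2$ (giving $n=2^m+2^{m-3}$). The paper organizes the final bookkeeping as a case analysis on $t_a\in\{m_a-1,m_a-2\}$ and then $t_b$, but the computation is the same as yours.
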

\begin{proof}
Indeed, for $1\leq t\leq m$, $\|2^m+2^t\|_2=m+1$ by \Cref{two-power}. Also, the result is trivially true when $m=1$ and easy to verify when $m=2$. We will now prove this by induction on $m$.

Let $m>2$ and suppose that the result holds for all $m^\prime<m$. Let $n\in 2\Z^+$ with $\|n\|_2=m+1$ and $2^m<n\leq2^{m+1}$. Then, there exist $a,b\in 2\Z^+$ such that $n=a+b$ or $n=ab$ with $\|a\|_2+\|b\|_2=\|n\|_2=m+1$. Let $m_a=\|a\|_2$ and $m_b=\|b\|_2$. If $n=a+b$, then $2^m<n\leq 2^{m_a}+2^{m_b}\leq 2^m+2$, so $n=2^m+2$ and hence is of the required form.

Now, suppose $n=ab$. Note that if $a\leq 2^{m_a-1}$, then $n=ab\leq 2^{m_a-1}\cdot2^{m_b}\leq 2^m$, contradicting the assumption. Hence, $2^{m_a-1}<a\leq 2^{m_a}$ and similarly $2^{m_b-1}<b\leq 2^{m_b}$. If $m_a=1$, then $a=2, n=2b$, and $\|b\|_2=m$ with $2^{m-1}<b\leq 2^m$. As $m>m-1\geq 1$, $b=2^{m-1}+2^t$ for some $1\leq t\leq m-1$ by the induction hypothesis for $m-1$, so $n=2b$ is of the required form.

Hence, suppose that $m_a,m_b>1$. For convenience, let $c\in\{a,b\}$. As $m>m_c-1\geq1$ and $2^{m_c-1}<c\leq 2^{m_c}$, we have $c=2^{m_c-1}+2^{t_c}$ for some $1\leq t_c\leq m_c-1$ by the induction hypothesis for $m_c-1$. Then, 
\begin{align*}
    2^{m_a+m_b-1}\;\leq\;n\eq ab&\eq (2^{m_a-1}+2^{t_a})(2^{m_b-1}+2^{t_b}) \\
    &\eq 2^{m_a+m_b-2}+2^{m_a+t_b-1}+2^{m_b+t_a-1}+2^{t_a+t_b}.
\end{align*}
It is easy to see that one must have $t_c\geq m_c-2$ for some $c\in\{a,b\}$ in order for the right hand side to be $\geq 2^{m_a+m_b-1}$. Without loss of generality, suppose that $t_a\geq m_a-2$. If $t_a=m_a-1$, then $n=2^{m_a}\cdot b$ so it is of the required form. Hence, suppose that $t_a=m_a-2$. Then, 
\begin{align*}
   2^{m_a+m_b-1}\;\leq\; n\eq 2^{m_a+m_b-2}+2^{m_a+m_b-3}+2^{m_a+t_b-1}+2^{m_a+t_b-2},
\end{align*}
implying that $t_b\geq m_b-2$. Now, if $t_b=m_b-1$, then $n=2^{m_a+m_b}=2^m+2^m$ is of the required form; if $t_b=m_b-2$, then
\begin{align*}
    n\eq (2^{m_a-1}+2^{m_a-2})(2^{m_b-1}+2^{m_b-2})\eq 2^{m_a+m_b-1}+2^{m_a+m_b-4}
\end{align*}
so $n$ is also of the required form (note that $m_a,m_b>2$ in this case). The result then follows.
\end{proof}

An immediate corollary of this lemma is a slightly better lower bound  for $\|n\|_2$ when $n$ is not expressible as a sum of two powers of $2$, as stated below.

\begin{corollary}
\label{lowerbound}
Let $n\in2\Z^+$. If $n$ is not expressible as a sum of (one or) two powers of $2$, then
\begin{align*}
    \log_2n+1\;\leq\;\|n\|_2.
\end{align*}
\end{corollary}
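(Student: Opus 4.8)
The plan is to combine the logarithmic lower bound from \Cref{2-sum-power} with the exact classification of complexity-$(m+1)$ integers in \Cref{2-two-power-classify}, closing the remaining gap by an integrality argument.

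First I would set $m=\lceil\log_2 n\rceil-1$, so that $2^m<n\leq 2^{m+1}$ and hence $m<\log_2 n\leq m+1$. The lower bound in \Cref{2-sum-power}(3) gives $\|n\|_2\geq\log_2 n>m$, and since $\|n\|_2$ is a nonnegative integer this already forces $\|n\|_2\geq m+1$.

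Next I would rule out the value $m+1$. The key observation is that every integer of the form $2^m+2^t$ with $1\leq t\leq m$ is a sum of one or two powers of $2$ (the boundary case $t=m$ giving the single power $2^{m+1}$), and conversely every even $n$ lying in $(2^m,2^{m+1}]$ that is a sum of one or two powers of $2$ is necessarily of exactly this shape. Thus the hypothesis that $n$ is not such a sum translates into $n\neq 2^m+2^t$ for every $1\leq t\leq m$. By the contrapositive of \Cref{2-two-power-classify}, this yields $\|n\|_2\neq m+1$. Combining with the previous step gives $\|n\|_2\geq m+2$, and since $\log_2 n\leq m+1$ we conclude $\log_2 n+1\leq m+2\leq\|n\|_2$, as desired.

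The only delicate point---hardly an obstacle---is the bookkeeping in the middle step: one must verify that the excluded forms $2^m+2^t$ with $1\leq t\leq m$ exhaust precisely the even sums of one or two powers of $2$ lying in the interval $(2^m,2^{m+1}]$, so that the hypothesis of the corollary translates cleanly into the exclusion condition required to invoke \Cref{2-two-power-classify}. This amounts to checking the two cases of equal versus distinct exponents, after which the corollary follows immediately.
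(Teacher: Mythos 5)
Your argument is correct and is essentially the paper's own proof: establish $\|n\|_2\geq m+1$ from the logarithmic lower bound plus integrality, then use \Cref{2-two-power-classify} to exclude the value $m+1$, giving $\|n\|_2\geq m+2\geq\log_2 n+1$. The extra bookkeeping you flag (that the even sums of one or two powers of $2$ in $(2^m,2^{m+1}]$ are exactly the $2^m+2^t$ with $1\leq t\leq m$) is a worthwhile explicit check that the paper leaves implicit, but it does not change the route.
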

\begin{proof}
Let $m=\lceil\log_2n\rceil-1$ so that $2^m< n\leq 2^{m+1}$. Note that $\|n\|_2\geq\lceil\log_2n\rceil=m+1$ by \Cref{two-power}, and that $\|n\|_2\neq m+1$ by the assumption and \Cref{2-two-power-classify}, so it follows that \mbox{$\|n\|_2\geq m+2=\lceil\log_2n\rceil+1$}.
\end{proof}

\begin{corollary}
\label{m+2}
Let $m\geq 3$ and $n\in 2\Z^+$ such that $2^m<n\leq 2^{m+1}$. Then, \mbox{$\|n\|_2=m+2$} if $n$ is of one of the following forms:
\begin{enumerate}[label=\rm{(\alph*)}]
    \item $2^{m_1}+2^{m_2}+2^{m_3}$ for $m=m_1>m_2>m_3\geq 1$;
    \item $2^{m_1}+2^{m_2}+2^{m_3}+2^{m_4}$ for $m=m_1>m_2>m_3>m_4\geq2$ with $m_1+m_4=m_2+m_3$;
    \item $2^{m_1}+2^{m_1-3}+2^{m_2}+2^{m_2-1}$ for $m=m_1\geq m_2+3\geq6$.
    \item $2^m+2^{m-5}+2^{m-6}+2^{m-7}$ for $m\geq 10$.
\end{enumerate}
\end{corollary}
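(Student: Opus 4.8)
The plan is to prove the two inequalities $\|n\|_2 \le m+2$ and $\|n\|_2 \ge m+2$ separately for each of the four forms. The lower bound is uniform and cheap: in every case the binary expansion of $n$ contains at least three $1$'s. In form~(c) the two middle powers collide precisely when $m_1 = m_2+3$, but then $2^{m_1-3}+2^{m_2}=2^{m_2+1}$ and $n=2^{m_1}+2^{m_2+1}+2^{m_2-1}$ still has three distinct powers. Hence in all cases $n$ is not a sum of one or two powers of $2$, so \Cref{lowerbound} gives $\|n\|_2\ge\log_2 n+1>m+1$, i.e. $\|n\|_2\ge m+2$. All the real work is in exhibiting, for each form, an expression that uses exactly $m+2$ copies of $2$.

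For form~(a) the upper bound is immediate from the second part of \Cref{2-sum-power}, which gives $\|2^{m_1}+2^{m_2}+2^{m_3}\|_2\le m_1+2$. The point to keep in mind is that for a genuine four-term number that same additive bound only yields $m+3$, so in forms (b)--(d) one must extract an honest multiplicative saving. For form~(b) the arithmetic condition $m_1+m_4=m_2+m_3$ is exactly what lets $n$ factor as a product of two sums of two powers. I would verify the identity
\[ 2^{m_1}+2^{m_2}+2^{m_3}+2^{m_4}=\bigl(2^{m_2-1}+2^{m_4-1}\bigr)\bigl(2^{m_1-m_2+1}+2\bigr), \]
where the bookkeeping for the third exponent uses $m_1-m_2+m_4=m_3$, and check $m_2-1>m_4-1\ge1$ and $m_1-m_2+1\ge2$. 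Then \Cref{two-power} bounds the two factors by $m_2$ and $m_1-m_2+2$, summing to $m_1+2$.

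Forms (c) and (d) are the main obstacle, because these numbers do \emph{not} split as a product of two binomials: the sum of their largest and smallest exponents differs from the sum of the middle two, so the argument for (b) cannot be reused, and the right factorization has to be found by hand. For (c) I would use
\[ 2^{m_1}+2^{m_1-3}+2^{m_2}+2^{m_2-1}=(2^2+2)\bigl(2^{m_1-3}+2^{m_1-4}+2^{m_2-2}\bigr), \]
the cofactor being a sum of three powers, with distinct positive exponents since $m_1\ge m_2+3$ and $m_2\ge3$; its complexity is at most $m_1-1$ by the second part of \Cref{2-sum-power}, while $\|2^2+2\|_2=3$, for a total of $m_1+2$. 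The two repeated powers $2^{m_1-2}$ and the resulting $2^{m_1-1}$'s collapse to produce the leading $2^{m_1}$, which is what makes this economical. For (d) I would observe that $n=2^{m-7}\cdot135=2^{m-10}(2^2+2)(2^3+2)(2^4+2)$, a power of $2$ times the product $6\cdot10\cdot18=1080$; since $\|2^{s+1}+2\|_2=s+2$ by \Cref{two-power}, the three factors cost $3,4,5$ and the leading power costs $m-10$, giving $m+2$ in total, with the hypothesis $m\ge10$ exactly ensuring the leading exponent is nonnegative. In each of the four cases the uniform lower bound then upgrades these inequalities to equalities.
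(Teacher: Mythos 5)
Your proposal is correct and follows essentially the same strategy as the paper: the lower bound comes from \Cref{lowerbound} (your explicit check that form (c) still has three distinct binary digits when $m_1=m_2+3$ is a point the paper leaves implicit), and the upper bound comes from explicit factorizations which in cases (b) and (c) coincide with the paper's up to redistributing a power of $2$ between the factors. The only genuine difference is case (d), where you use $n=2^{m-10}(2^2+2)(2^3+2)(2^4+2)$ instead of the paper's $(2^{m-6}+2^{m-9})(2^5+2^4+2^3+2^2)$; both yield $m+2$, and yours has the minor advantage of not invoking case (b) to bound the four-term factor $2^5+2^4+2^3+2^2$.
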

\begin{proof}
First, note that in each case $n$ is not expressible as a sum of two powers of~$2$, so $m+2=\lceil\log_2n\rceil+1\leq\|n\|_2$ by \Cref{lowerbound}. It then suffices to show that $\|n\|_2\leq m+2$ in each case, and we will show this case by case.
\begin{enumerate}[label=\rm{(\alph*):}]
    \item It follows from \Cref{2-sum-power} that $\|2^{m_1}+2^{m_2}+2^{m_3}\|_2\leq m_1+3-1=m+2$.
    \item As $m_1+m_4=m_2+m_3$, we have
    \begin{align*}
        2^{m_1}+2^{m_2}+2^{m_3}+2^{m_4}\eq 2^{m_4-2}\cdot(2^{m_2-m_4+1}+2)\cdot(2^{m_3-m_4+1}+2).
    \end{align*}
    It then follows from \Cref{2-sum-power} that 
    \begin{align*}
        \|2^{m_1}+2^{m_2}+2^{m_3}+2^{m_4}\|_2&\;\leq\; m_4-2+(m_2-m_4+2)+(m_3-m_4+2)\\
        &\eq m_1+2\eq m+2.
    \end{align*}
    \item Note that
    \begin{align*}
        2^{m_1}+2^{m_1-3}+2^{m_2}+2^{m_2-1}\eq (2^{m_2-1}+2^{m_2-2})\cdot(2^{m_1-m_2}+2^{m_1-m_2-1}+2).
    \end{align*}
    It then follows from \Cref{2-sum-power} that 
    \begin{align*}
        \|2^{m_1}+2^{m_1-3}+2^{m_2}+2^{m_2-1}\|_2&\;\leq\; m_2+(m_1-m_2+2)\eq m+2.
    \end{align*}
    \item Note that
    \begin{align*}
        2^m+2^{m-5}+2^{m-6}+2^{m-7}\eq (2^{m-6}+2^{m-9})\cdot(2^5+2^4+2^3+2^2).
    \end{align*}
    It then follows from \Cref{2-sum-power} and (b) that
    \begin{align*}
        \|2^m+2^{m-5}+2^{m-6}+2^{m-7}\|_2\;\leq\;(m-5)+7\eq m+2.
    \end{align*}
\end{enumerate}
In conclusion, we always have $\|n\|_2\leq m+2$ in each case. The result then follows.
\end{proof}

As promised before, it follows from this lemma that the inequality in the second part of \Cref{2-sum-power} is an equality when $k=3$ but is no longer an equality when $k\geq 4$. In fact, the four cases described in \Cref{m+2} are the only cases with $2^m<n\leq 2^{m+1}$ and $\|n\|_2=m+2$, as shown in the following proposition, which is also the second part of \Cref{2-classify}.

\begin{proposition}
\label{exactm+2}
Let $m\geq 3$ and $n\in 2\Z^+$ with $2^m<n\leq2^{m+1}$. Then, $\|n\|_2=m+2$ if and only if $n$ is of one of the forms described in \Cref{m+2}.
\end{proposition}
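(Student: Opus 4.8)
The ``if'' direction is exactly \Cref{m+2}, so the content is the converse: assuming $\|n\|_2=m+2$ with $2^m<n\leq2^{m+1}$, I must show $n$ has one of the forms (a)--(d). The plan is strong induction on $m$. The finitely many small base cases (in particular those below the thresholds $m=6$ and $m=10$ at which forms (c) and (d) first appear) are dispatched by direct inspection, aided by the final proposition of \Cref{l-complex-section}, which lets one strip off the $2^1$-bit and reduce to the case $4\mid n$. For the inductive step, fix an optimal decomposition supplied by the recursive definition: there are $a,b\in2\Z^+$ with $n=a+b$ or $n=ab$ and $m_a+m_b=m+2$, where $m_a=\|a\|_2$ and $m_b=\|b\|_2$.

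The addition case $n=a+b$ is the easy one. Since $a\leq2^{m_a}$ and $b\leq2^{m_b}$ by \Cref{largest-number}, the hypothesis $n>2^{m}=2^{m_a+m_b-2}$ forces, on dividing by $2^{m_a}$ with $m_a\geq m_b$, the inequality $2^{m_b-2}<1+2^{-(m_a-m_b)}\leq2$, hence $m_b\in\{1,2\}$ and $b\in\{2,4\}$. In each subcase $a=n-b$ is pinned into a narrow window; \Cref{2-two-power-classify} identifies the efficient value $a=2^m+2^t$, while \Cref{lowerbound} rules out the inefficient candidates such as $a=2^m-2$. What survives is exactly $n=2^m+2^t+2$ with $2\leq t\leq m-1$, which is of form (a).

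The multiplication case $n=ab$ carries all the weight. First, if a factor $c$ had $c\leq2^{m_c-2}$ then $n=ab\leq2^{m_a+m_b-2}=2^m$, contradicting $n>2^m$; so each factor lies in $2^{m_c-2}<c\leq2^{m_c}$ and falls into one of two regimes: \emph{(A)} $2^{m_c-1}<c\leq2^{m_c}$ with $\|c\|_2=\lceil\log_2c\rceil$, whence \Cref{2-two-power-classify} makes $c$ a sum of two powers; or \emph{(B)} $2^{m_c-2}<c\leq2^{m_c-1}$ with $\|c\|_2=\lceil\log_2c\rceil+1$, whence the induction hypothesis (at the smaller top exponent $m_c-2$) makes $c$ one of the forms (a)--(d). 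A size estimate excludes both factors being of type (B), since that gives $n\leq2^{m_a-1}2^{m_b-1}=2^m$. So either both factors are type (A) (case MM) or exactly one is type (B) and the other type (A) (case MB), and in MB the type-(A) factor further satisfies $a>2^{m_a-1}$ and the cross-inequality $(a/2^{m_a})(b/2^{m_b-1})>1/2$. Case MM expands as a product of two binomials, whose four exponents satisfy the balance relation $m_1+m_4=m_2+m_3$ of form (b) (degenerating to form (a), consistent with the identity in \Cref{m+2}(b), when exponents collide under binary carries).

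Case MB is the crux and the expected main obstacle. Here one multiplies a sum of two powers by a type-(B) number and must prove that, after reducing the product in binary, the outcome always lands in (a)--(d). The genuinely new forms arise transparently from the factorizations in \Cref{m+2}: a type-(B) factor of form (a) produces form (c), and a type-(B) factor of form (b) produces form (d), while a type-(B) factor of form (c) or (d) multiplied by a pure power $2^{m_a}$ merely reproduces form (c) or (d) at the new scale (this is what makes the induction bottom out). The heart of the argument is showing that no configuration of greater ``depth'' can occur: any genuine binomial multiplier against a form-(c) or form-(d) factor, and all exponent patterns outside those recorded in \Cref{m+2}, either overflow the window $n\leq2^{m+1}$ or, via the exact lower bound of \Cref{lowerbound}, force $\|n\|_2>m+2$. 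Establishing this termination of the list, together with the rigid exponent gaps in (c) and (d), is the delicate part: it requires careful bookkeeping of binary carries in the expanded products combined with the cross-inequality $(a/2^{m_a})(b/2^{m_b-1})>1/2$, which is precisely what confines the admissible exponent differences to the patterns in \Cref{m+2}.
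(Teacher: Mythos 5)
Your skeleton matches the paper's: induction on $m$, the addition case forcing one summand to be $2$ and yielding form (a), and in the multiplication case the trichotomy pure power / both factors in the top dyadic half (your MM) / exactly one factor in the lower half (your MB). The addition case and case MM are essentially complete and agree with the paper. But case MB, which you correctly identify as carrying all the weight, is not actually proved: the paragraph describing it asserts that ``any genuine binomial multiplier against a form-(c) or form-(d) factor, and all exponent patterns outside those recorded in \Cref{m+2}, either overflow the window $n\leq 2^{m+1}$ or \dots force $\|n\|_2>m+2$,'' and that establishing this ``requires careful bookkeeping of binary carries combined with the cross-inequality.'' That sentence is a restatement of what must be shown in this case, not an argument for it; the cross-inequality $(a/2^{m_a})(b/2^{m_b-1})>1/2$ by itself does not confine the exponent of the second term of $a=2^{m_a-1}+2^{t_a}$, nor does it enumerate which inductive forms of $b$ survive.

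The missing ingredient is the quantitative step that makes the case analysis finite. Since $b$ satisfies $2^{m_b-2}<b<2^{m_b-1}$ with $\|b\|_2=m_b$, the induction hypothesis (applied at top exponent $m_b-2$) puts $b$ in one of the forms (a)--(d), and every such $b$ satisfies $b\leq 2^{m_b-2}+2^{m_b-3}+2^{m_b-4}+2^{m_b-5}=15\cdot 2^{m_b-5}$. Feeding this into $ab=n>2^m$ gives $2^{m_a-1}+2^{t_a}>2^{m_a+3}/15$, hence $t_a\geq m_a-4$. One then runs the three subcases $t_a=m_a-4,\,m_a-3,\,m_a-2$, in each of which the sharpened lower bound $b>2^m/(2^{m_a-1}+2^{t_a})$ (numerically $>7\cdot2^{m_b-4}$, $>51\cdot 2^{m_b-7}$, $>85\cdot2^{m_b-8}$ respectively) whittles the admissible $b$ down to a short explicit list, and each surviving product is checked to land in (a)--(d) or to contradict $\|n\|_2=m+2$. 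Without this bound and the resulting enumeration, your proof of the ``only if'' direction is incomplete precisely where the new forms (c) and (d) are generated and where one must rule out everything else.
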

\begin{proof}
Throughout the proof, we also adopt the notation $c\in\{a,b\}$ for convenience as before. We will prove this by induction on $m$. The result is trivially true when $m=3,4$ so suppose that $m>4$ and that the result holds for all $m^\prime<m$.

First, let $a,b\in 2\Z^+$ such that $n=a+b$ with $\|a\|_2+\|b\|_2=\|n\|_2=m+2$. If~$a,b>2$, then $n=a+b\leq 2^{\|a\|_2}+2^{\|b\|_2}\leq 2^m+2^2$, implying that $n=2^m+2$ or $2^m+2^2$, both of which contradict that $\|n\|_2=m+2$ by \Cref{2-sum-power}. Thus, one of $a,b$ must be 2. Without loss of generality, let $a=2$. Then, $\|b\|_2=m+1$ and $2^m-2<b\leq 2^{m+1}-2$. This implies that $2^m<b<2^{m+1}$ so $b=2^m+2^t$ for some $m>t\geq1$ by \Cref{2-two-power-classify}. We have $t>1$ as otherwise $n=2^m+2^2$ contradicting that $\|n\|_2=m+2$. Thus, $n=2^m+2^t+2$ is of the form (a).

Now, let $a,b\in 2\Z^+$ such that $n=ab$ with $\|a\|_2+\|b\|_2=\|n\|_2=m+2$. Let $m_c=\|c\|_2$ for $c\in\{a,b\}$. If $a=2^{m_a}$, then $2^{m-m_a}<b\leq 2^{m-m_a+1}$ and $\|b\|_2=m-m_a+2$, so $b$ is of one of the forms by the induction hypothesis for $m-m_a$, and hence so is $n=2^{m_a}\cdot b$. Thus, suppose that $c<2^{m_c}$ for both $c\in\{a,b\}$, and note that this implies that $m_c>2$ for both $c\in\{a,b\}$. If $a\leq2^{m_a-2}$, then $n=ab\leq 2^{m_a-2}\cdot 2^{m_b}=2^m$, contradicting that $n>2^m$. Hence, $c>2^{m_c-2}$ for both $c\in\{a,b\}$.

To summarize, we are now in the situation where $m_c>2$ and $2^{m_c-2}<c<2^{m_c}$ for both $c\in\{a,b\}$. If $c<2^{m_c-1}$ for both $c\in\{a,b\}$, then $n=ab<2^m$, which leads to contradiction. Suppose for now that $c>2^{m_c-1}$ for both $c\in\{a,b\}$. Then, $2^{m_c-1}<c<2^{m_c}$ and $\|c\|_2=m_c$, so $c=2^{m_c-1}+2^{t_c}$ for some $1\leq t_c<m_c-1$ by \Cref{2-two-power-classify}. Hence,
\begin{align*}
    n\eq ab\eq 2^{m_a+m_b-2}+2^{m_a+t_b-1}+2^{m_b+t_a-1}+2^{t_a+t_b}
\end{align*}
with $m_a+m_b-2>m_a+t_b-1,m_b+t_a-1>t_a+t_b$, so $n$ is of either the form (a) or (b). Thus, suppose without loss of generality that $a>2^{m_a-1}$ and $b<2^{m_b-1}$. As before, we still have $a=2^{m_a-1}+2^{t_a}$ for some $1\leq t_a<m_a-1$.

To summarize again, we are now in the situation where $m_a,m_b>2$, $a=2^{m_a-1}+2^{t_a}$ with $1\leq t_a<m_a-1$, and $2^{m_b-2}<b<2^{m_b-1}$ with $\|b\|_2=m_b$. Note that the condition on~$b$ readily implies that $m_b\geq 5$. By the induction hypothesis for $m_b-2$, $b$ is of one of the forms, so $b\leq 2^{m_b-2}+2^{m_b-3}+2^{m_b-4}+2^{m_b-5}=15\cdot2^{m_b-5}$. Hence, 
\begin{align*}
    2^{m_a-1}+2^{t_a}\eq\frac{n}{b}\;>\;\frac{2^m}{b}\;\geq\;\frac{2^m}{15\cdot 2^{m_b-5}}\eq\frac{2^{m_a+3}}{15}
\end{align*}
so $2^{t_a}>\frac{2^{m_a-1}}{15}$, implying that $t_a\geq m_a-4$.

\textbf{Case i.} $t_a=m_a-4$. Then, 
\begin{align*}
    b\;>\;\frac{2^m}{2^{m_a-1}+2^{m_a-4}}\eq\frac{2^{m_b+2}}{9}\eq\frac{7\cdot2^{m_b+2}}{63}&\;>\;7\cdot 2^{m_b-4}
    \\
    &\eq 2^{m_b-2}+2^{m_b-3}+2^{m_b-4}.
\end{align*}
Since $b$ is of one of the forms, we must have $b=2^{m_b-2}+2^{m_b-3}+2^{m_b-4}+2^{m_b-5}$, so $n$ is of the form (d).

\textbf{Case ii.} $t_a=m_a-3$. 
Then, 
\begin{align*}
    b\thin>\thin\frac{2^m}{2^{m_a-1}+2^{m_a-3}}\thin=\thin\frac{2^{m_b+1}}{5}\thin=\thin\frac{51\cdot2^{m_b+1}}{255}&\thin>\thin51\cdot 2^{m_b-7}\\
    &\thin=\thin2^{m_b-2}+2^{m_b-3}+2^{m_b-6}+2^{m_b-7}.
\end{align*}
Since $b$ is of one of the forms, we must have 
\begin{enumerate}[label=(\arabic*),noitemsep]
    \item $b=2^{m_b-2}+2^{m_b-3}+2^{m_b-5}$ so $n=2^m+2^{m-6}$, contradicting that $\|n\|_2=m+1$;
    \item $b=2^{m_b-2}+2^{m_b-3}+2^{m_b-5}+2^{m_b-6}$ so $n$ is of the form (d);
    \item $b=2^{m_b-2}+2^{m_b-3}+2^{m_b-4}$ so $n$ is of the form (a);
    \item $b=2^{m_b-2}+2^{m_b-3}+2^{m_b-4}+2^{m_b-5}$ so $n$ is also of the form (c).
\end{enumerate}

\textbf{Case iii.} $t_a=m_a-2$.
Then, 
\begin{align*}
    b\;>\;\frac{2^m}{2^{m_a-1}+2^{m_a-2}}\eq\frac{2^{m_b}}{3}\eq\frac{85\cdot2^{m_b}}{255}&\;>\;85\cdot2^{m_b-8}\\
    &\eq 2^{m_b-2}+2^{m_b-4}+2^{m_b-6}+2^{m_b-8}.
\end{align*}
Since $b$ is of one of the forms, we must have
\begin{enumerate}[label=(\arabic*),noitemsep]
    \item $b=2^{m_b-2}+2^{m_b-4}+2^{m_b-5}$ so $n=2^m+2^{m-5}$, contradicting that $\|n\|_2=m+2$;
    \item $b=2^{m_b-2}+2^{m_b-4}+2^{m_b-5}+2^{m_b-7}$ so $n$ is of the form (d);
    \item $b=2^{m_b-2}+2^{m_b-3}+2^{t_b}$ with $1\leq t_b<m_b-3$ so $n$ is of the form (c);
    \item $b=2^{m_b-2}+2^{m_b-3}+2^{t_b}+2^{t_b-1}$ with $t_b<m_b-3$ so $n$ is of the form (b).
\end{enumerate}

In conclusion, $n$ must be of one of the forms described in \Cref{m+2}.
\end{proof}

As before, this proposition also yields a slightly better lower bound for us.

\begin{corollary}
\label{llowerbound}
Let $n\in 2\Z^+$. If $n$ is not expressible as a sum of two powers of $2$ or one of the forms described in \Cref{m+2}, then $\log_2n+2\leq\|n\|_2$. In particular, if $n$ is expressible as a sum of at least five distinct powers of $2$, then $\log_2n+2\leq\|n\|_2$.
\end{corollary}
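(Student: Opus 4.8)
The plan is to read the bound straight off the two classification results by climbing a short ladder of exclusions. Set $m=\lceil\log_2 n\rceil-1$, so that $2^m<n\leq 2^{m+1}$ and in particular $\log_2 n\leq m+1$; the target inequality $\log_2 n+2\leq\|n\|_2$ will then follow as soon as I establish $\|n\|_2\geq m+3$, since $\log_2 n+2\leq(m+1)+2=m+3$. The baseline is $\|n\|_2\geq\log_2 n>m$ from \Cref{2-sum-power}(3), so $\|n\|_2\geq m+1$, and I then rule out the two smallest surviving values in turn.

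Assuming $n$ is neither a sum of (one or) two powers of $2$ nor one of the four forms of \Cref{m+2}, \Cref{2-two-power-classify} forbids $\|n\|_2=m+1$, giving $\|n\|_2\geq m+2$, and \Cref{exactm+2} then forbids $\|n\|_2=m+2$, giving $\|n\|_2\geq m+3$ as required. The only point needing attention is that \Cref{exactm+2} is stated for $m\geq 3$; this is harmless, since for $m\in\{1,2\}$ every even $n$ with $2^m<n\leq 2^{m+1}$ (namely $4=2^2$, $6=2^2+2$, and $8=2^3$) is already a sum of at most two powers of $2$, so the hypothesis itself forces $m\geq 3$.

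For the ``in particular'' clause I would reduce to the first part by counting binary weight. A sum of at least five distinct powers of $2$ has binary weight at least five, whereas a sum of two powers has weight at most two and each of the forms (a)--(d) in \Cref{m+2} has weight at most four: (a), (b), (d) are visibly sums of three or four distinct powers, and in (c) the exponents satisfy $m_1>m_1-3\geq m_2>m_2-1$, so they give four distinct powers unless $m_1-3=m_2$, in which case the two middle terms merge into $2^{m_2+1}$ and the weight drops to three. By uniqueness of the binary expansion, a number of weight at least five matches none of these, so both exclusion hypotheses of the first part hold and it applies. The whole argument is bookkeeping on top of \Cref{2-two-power-classify} and \Cref{exactm+2}; the only genuine care required is the weight count for form (c), where two of the listed powers may coincide, together with the routine observation that the hypothesis already disposes of the out-of-range cases $m\leq 2$.
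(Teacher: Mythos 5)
Your proposal is correct and follows exactly the route the paper intends: the paper gives no explicit proof, treating the corollary as immediate from \Cref{2-two-power-classify} and \Cref{exactm+2} in the same way \Cref{lowerbound} follows from \Cref{2-two-power-classify}, which is precisely your exclusion ladder. Your extra care with the out-of-range cases $m\leq 2$ and the binary-weight count for form (c) (where $m_1-3=m_2$ can merge two terms) only makes explicit details the paper leaves to the reader.
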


Having this corollary, we are now ready to prove \Cref{6r-10r}.

\begin{proposition}
\label{6r}
For $m\geq0$, we have
\begin{align*}
    \|2^m\cdot 6^r\|_2\eq m+3r\quad\text{ for }1\leq r\leq 7
\end{align*}
and
\begin{align*}
    m+3r-1\;\leq\;\|2^m\cdot 6^r\|_2\;\leq\; m+3r \quad\text{ for }8\leq r\leq 9.
\end{align*}
\end{proposition}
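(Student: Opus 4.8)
The plan is to exploit the factorization $n\deq 2^m\cdot 6^r=2^{m+r}\cdot 3^r$, so that in binary $n$ is just the expansion of $3^r$ shifted left by $m+r$ places. Consequently the number of distinct powers of $2$ occurring in $n$ equals the number of $1$'s in the binary expansion of $3^r$, and $\log_2 n=m+r\log_2 6$. The upper bound is immediate and uniform in $r$: since $6=2+2^2$ gives $\|6\|_2\leq 3$, submultiplicativity of $\|\cdot\|_2$ yields $\|n\|_2\leq\|2^m\|_2+r\|6\|_2\leq m+3r$ (using the convention $\|2^0\|_2=0$). Thus the whole content of the proposition lies in producing matching lower bounds.

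For the lower bound I would split on $r$ according to how many powers of $2$ occur in $n$, invoking the progressively stronger estimates of \Cref{two-power}, \Cref{lowerbound}, and \Cref{llowerbound}. First record the binary expansions $3=11_2$, $3^2=1001_2$, $3^3=11011_2$, $3^4=1010001_2$, $3^5=11110011_2$, $3^6=1011011001_2$, $3^7=100010001011_2$, whose digit-sums are $2,2,4,3,6,6,5$. For $r=1,2$ the number $n=2^{m+2}+2^{m+1}$, respectively $n=2^{m+5}+2^{m+2}$, is a sum of two powers of $2$, so \Cref{two-power} gives $\|n\|_2=m+3$, respectively $m+6$, outright. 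For $r=3,4$ the number $n$ has at least three distinct binary digits, hence is not a sum of two powers of $2$, and \Cref{lowerbound} gives $\|n\|_2\geq\lceil\log_2 n+1\rceil=m+\lceil r\log_2 6+1\rceil$. For $r=5,6,7$ the expansion of $3^r$ has at least five $1$'s, so $n$ is a sum of at least five distinct powers of $2$ and \Cref{llowerbound} gives the stronger $\|n\|_2\geq\lceil\log_2 n+2\rceil=m+\lceil r\log_2 6+2\rceil$.

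The crux is then the arithmetic check that these ceilings hit the target. Using $\log_2 6=2.58496\ldots$ one verifies $\lceil r\log_2 6+1\rceil=3r$ for $r=3,4$ and $\lceil r\log_2 6+2\rceil=3r$ for $r=5,6,7$, which together with the upper bound forces $\|n\|_2=m+3r$ throughout $1\leq r\leq 7$; note that each ceiling is of the shape $\lceil m+x\rceil=m+\lceil x\rceil$ with $m$ an integer, so the shift by $m$ is harmless and the computation reduces to the pure constant $r\log_2 6$. For $r=8,9$ the expansions of $3^8,3^9$ still have at least five $1$'s (six and eight, respectively), so \Cref{llowerbound} applies unchanged; but now $\lceil r\log_2 6+2\rceil=3r-1$, giving only $m+3r-1\leq\|n\|_2\leq m+3r$. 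I do not anticipate any deep structural obstacle: the whole argument is a finite case analysis, and the single genuine ``difficulty'' is that the elementary $+2$ bound of \Cref{llowerbound} simply stops being tight enough to reach $m+3r$ once $r\geq 8$, which is precisely why the statement must weaken to a one-sided estimate there.
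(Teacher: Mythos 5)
Your proposal is correct and follows essentially the same route as the paper's proof: the trivial upper bound $\|2^m\cdot 6^r\|_2\leq m+3r$, exact evaluation via \Cref{two-power} for $r=1,2$, the $\log_2 n+1$ lower bound of \Cref{lowerbound} for $r=3,4$, and the $\log_2 n+2$ lower bound of \Cref{llowerbound} for $5\leq r\leq 9$, with the same ceiling computations (including the loss of one at $r=8,9$). Your binary digit-counts and arithmetic checks are all accurate.
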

\begin{proof}
By \Cref{2-sum-power}, if $r=1$, then $\|2^m\cdot 6\|_2=\|2^{m+2}+2^{m+1}\|_2=m+3$, and if $r=2$, then $\|2^m\cdot 6^2\|_2=\|2^{m+5}+2^{m+2}\|_2=m+6$. Now, assume that $3\leq r \leq 9$ so that $n=2^m\cdot(2^2+2)^r$ is not a sum of two powers of $2$. We then have $\log_2n+1\leq\|n\|_2$ by \Cref{lowerbound}, so $m+(\log_26)r+1\leq\|n\|_2$. On the other hand, $\|n\|_2\leq m+r\|6\|_2=m+3r$. Thus, $\lceil m+(\log_26)r+1\rceil\leq\|n\|_2\leq m+3r$. When $r=3,4$, computation shows that $\lceil (\log_26)r+1\rceil=3r$, so the result follows.

Now, assume that $5\leq r\leq 9$. Computation shows that $6^r$ is a sum of at least five distinct powers of $2$ in this case, and so is $n=2^m\cdot 6^r$. We then have $\log_2n+2\leq\|n\|_2$ by \Cref{llowerbound}, so $m+(\log_26)r+2\leq\|n\|_2$. Thus, $\lceil m+(\log_26)r+2\rceil\leq\|n\|_2\leq m+3r$ for $5\leq r\leq 9$. Now, computation shows that $\lceil (\log_26)r+2\rceil=3r$ when $5\leq r\leq 7$, and that $\lceil (\log_26)r+2\rceil=3r-1$ when $8\leq r\leq 9$, so the result follows.
\end{proof}

\begin{proposition}
For $m\geq0$, we have
\begin{align*}
    \|2^m\cdot 10^r\|_2\eq m+4r \quad\text{ for }1\leq r\leq 4
\end{align*}
and for $r=5$,
\begin{align*}
    m+19\;\leq\;\|2^m\cdot 10^5\|_2\;\leq\;m+20.
\end{align*}
\end{proposition}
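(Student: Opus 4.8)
The plan is to follow the proof of \Cref{6r} line for line, with $6$ replaced by $10$. The first step is to record the universal upper bound: since $10 = 2^3 + 2$, \Cref{2-sum-power} gives $\|10\|_2 = 4$, and therefore $\|2^m\cdot 10^r\|_2 \le m + r\|10\|_2 = m + 4r$ for every $r \ge 1$. All that remains is to match this against a lower bound coming from \Cref{lowerbound} or \Cref{llowerbound}, so that the only genuine content is the value of the ceiling $\lceil r\log_2 10 + c\rceil$ for $c\in\{1,2\}$.

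For the small cases I would argue directly. When $r=1$, the number $2^m\cdot 10 = 2^{m+3} + 2^{m+1}$ is a sum of two powers of $2$, so \Cref{2-sum-power} gives $\|2^m\cdot 10\|_2 = m+4$ exactly. When $r=2$, the number $2^m\cdot 100 = 2^{m+6}+2^{m+5}+2^{m+2}$ is not a sum of (one or) two powers of $2$, so \Cref{lowerbound} applies and yields $\|2^m\cdot 100\|_2 \ge \lceil m + 2\log_2 10 + 1\rceil = m+8$, which meets the upper bound $m+8$.

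For $r = 3,4,5$ I would first verify by explicit binary expansion that $10^r$---and hence $2^m\cdot 10^r$---is a sum of at least five distinct powers of $2$; for instance $10^3 = 2^9+2^8+2^7+2^6+2^5+2^3$, and $10^4,10^5$ likewise have six or more binary digits equal to $1$. Then \Cref{llowerbound} gives $\|2^m\cdot 10^r\|_2 \ge \lceil m + r\log_2 10 + 2\rceil$. A short numerical check shows $\lceil r\log_2 10 + 2\rceil = 4r$ for $r=3$ and $r=4$ (the values being $\lceil 11.97\rceil = 12$ and $\lceil 15.29\rceil = 16$), which pins down $\|2^m\cdot 10^r\|_2 = m+4r$ in both cases. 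For $r=5$ the same estimate gives $\lceil 18.61\rceil = 19$, one short of the upper bound $m+20$, so we obtain exactly the stated interval $m+19 \le \|2^m\cdot 10^5\|_2 \le m+20$.

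I do not expect any serious obstacle: the argument is entirely parallel to \Cref{6r}, and the one point demanding care is the arithmetic of the ceilings. The gap $4r - r\log_2 10 = r(4-\log_2 10) \approx 0.678\,r$ between the trivial upper bound and the logarithm grows linearly, so the lower bound of \Cref{llowerbound} is tight precisely up to $r=4$ and just misses at $r=5$; this is exactly why the result for $r=5$ can only be stated as an interval, and why I would not expect this elementary method alone to determine $\|2^m\cdot 10^r\|_2$ for $r\ge 5$.
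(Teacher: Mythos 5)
Your proposal is correct and follows essentially the same route as the paper: the trivial upper bound $m+4r$ from $\|10\|_2=4$, matched against the lower bounds of \Cref{lowerbound} and \Cref{llowerbound} after checking the binary expansions of $10^r$, with the ceiling arithmetic working out exactly as you computed. The only (immaterial) difference is at $r=2$, where the paper invokes \Cref{m+2} directly on $2^{m+6}+2^{m+5}+2^{m+2}$ to get equality, while you combine \Cref{lowerbound} with the upper bound; both give $m+8$.
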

\begin{proof}
If $r=1$, then $\|2^m\cdot 10\|_2=\|2^{m+3}+2^{m+1}\|_2=m+4$ by \Cref{2-sum-power}; if $r=2$, then $\|2^m\cdot 10^2\|_2=\|2^{m+6}+2^{m+5}+2^{m+2}\|_2=m+8$ by \Cref{m+2}. Now, assume that $3\leq r\leq5$. Computation shows that $10^r$ is a sum of at least five distinct powers of~$2$ in this case, and so is $n=2^m\cdot 10^r$. We then have $\log_2n+2\leq\|n\|_2$ by \Cref{llowerbound}, so $m+(\log_210)r+2\leq\|n\|_2$. On the other hand, $\|n\|_2\leq m+r\|10\|_2=m+4r$. Thus, $\lceil m+(\log_210)r+2\rceil\leq\|n\|_2\leq m+4r$. Now, computation shows that $\lceil (\log_210)r+2\rceil=4r$ when $3\leq r\leq 4$, and that $\lceil (\log_210)r+2\rceil=4r-1$ when $r=5$, so the result follows.
\end{proof}

\section{Further Questions}
\label{questions}

In the final section, we will propose a list of questions on $2$-complexity, and in general $l$-complexity, that seem interesting to investigate further.

\begin{question}
For $l>1$, can one improve the bounds for $\|n\|_l$ in \Cref{log-growth} for all $n\in l\Z^+$ or for a density $1$ subset of $l\Z^+$?
\end{question}

\begin{question}
Can one determine the $l$-complexity of multiples of $l$ that are of certain particular forms? For example, are the following true?
\begin{enumerate}[label=(\arabic*)]
    \item For $l$ not divisible by $2$, $\|(2l)^r\|_l=2r$ for all $r\geq 1$, and if $l>1$, then $\|l^m\cdot (2l)^r\|_l=m+2r$ for all $m\geq 0$ and $r\geq 1$. (Note that $l=1$ is a special case of \Cref{l=1-conj}.)
    \item For $l$ not divisible by $3$, $\|(3l)^r\|_l=3r$ for all $r\geq 1$, and if $l>1$, then $\|l^m\cdot (3l)^r\|_l=m+3r$ for all $m\geq 0$ and $r\geq 1$. (Note that $l=1$ follows from Selfridge's result and $l=2$ is one of the cases of \Cref{l=2-conj}.)
\end{enumerate}
\end{question}

The next question should be compared with the result on integer complexity that for every $n\in\Z^+$, there exists $m_0\geq 0$ such that for all $m\geq m_0$, $\|3^m\cdot n\|_1=3(m-m_0)+\|3^{m_0}\cdot n\|_1$ (see \cite[Theorem~1.5]{altman-zelinsky}).

\begin{question}
Let $l>1$ and define
\begin{align*}
   A_l\deq\big\{n\in l\Z^+\thin\big|\thin\|l^m\cdot n\|_l=m+\|n\|_l\text{ for all }m\geq 0\big\}.
\end{align*}
Can one say anything about the set $A_l$? For example, is it true that for every $n\in l\Z^+$, there exists $m_0\geq 0$ such that $l^{m_0}\cdot n\in A_l$? (Note that e.g. $54\notin A_2$ since $\|54\|_2=8$ and $\|2^2\cdot 54\|_2=\|6^3\|_2=9$, but $2^2\cdot 54\in A_2$ by \Cref{6r-10r}.)
\end{question}

\begin{question}
For $l=2$, should one expect \Cref{l=2-conj} to hold in a more general form, i.e., $\|2^m\cdot(2^u+2)^r\|_2=m+(u+1)r$ for all $m\geq 0$, $u\geq 2$, and $r\geq 1$?
\end{question}

The last question is an interesting interplay between integer complexity and $2$-complexity. Before stating the question, recall that if $\cl{R}$ is a most efficient representation of $n\in\Z^+$ in terms of $1$'s, then $\cl{R}$ does not contain $1+\cdots+1$ with more than five $1$'s. In particular, one can always replace $1+1+1+1$ with $(1+1)(1+1)$ and $1+1+1+1+1$ with $(1+1)(1+1)+1$ to assume that $\cl{R}$ does not contain $1+\cdots+1$ with more than three $1$'s. 

\begin{question}
Let $n\in\Z^+$ and let $\cl{R}$ be a most efficient representation of $n$ in terms of $1$'s that does not contain $1+\cdots+1$ with more than three $1$'s. Let $a(n,\cl{R})$ be the even number obtained by replacing all the $1$'s in $\cl{R}$ with $2$'s. Then, is it true that $\|a(n,\cl{R})\|_2=\|n\|_1$? If not, can one say anything about pairs $(n,\cl{R})$ satisfying or violating this?
\end{question}

\vskip 20pt\noindent {\bf Acknowledgement.} The author would like to thank the anonymous referee(s) for their detailed and useful suggestions.

\end{document}